\documentclass[reqno]{amsart}
\usepackage{amsmath,amssymb}
\usepackage[mathscr]{euscript}
\usepackage{mathtools}
\usepackage{xcolor}
\usepackage[msc-links]{amsrefs}
\usepackage{dsfont}
\usepackage{relsize}
\usepackage[normalem]{ulem}
\usepackage{bbold}
\usepackage[draft]{fixme}
\usepackage{hyperref}     

\definecolor{bblue}{rgb}{.2,0.2,.8}

\theoremstyle{plain}
\newtheorem{theorem}{Theorem}[section]
\newtheorem{proposition}[theorem]{Proposition}
\newtheorem{lemma}[theorem]{Lemma}
\newtheorem{corollary}[theorem]{Corollary}

\theoremstyle{definition}
\newtheorem{definition}[theorem]{Definition}

\theoremstyle{remark}
\newtheorem{remark}[theorem]{Remark}
\newtheorem{example}[theorem]{Example}

\numberwithin{equation}{section}
\numberwithin{theorem}{section}

\newcommand*{\defeq}{\mathrel{\vcenter{\baselineskip0.5ex \lineskiplimit0pt
			\hbox{\scriptsize.}\hbox{\scriptsize.}}}%
	=}

\newcommand{\circbin}{\mathbin{\circ}}
\newcommand{\oneplus}[1]{\omega_{#1}}

\renewcommand{\epsilon}{\varepsilon}

\newcommand{\N}{\mathbb{N}}

\newcommand{\mB}{\mathcal{B}}

\newcommand{\Set}[1]{\left\{#1\right\}}

\DeclareMathOperator{\Id}{I}

\DeclareMathOperator{\ord}{ord}
\DeclareMathOperator{\End}{End}
\title{Near-Rings and Skew Braces}

\author[R. Aragona]{Riccardo Aragona}
\address{\noindent Riccardo Aragona \hfill\break\indent 
	Department of Information Engineering, Computer Science and Mathematics\hfill\break\indent  University of L'Aquila
	\hfill\break\indent 
	67100 Coppito, L'Aquila, Italy
}
\email{riccardo.aragona@univaq.it}

\author[N. Gavioli]{Norberto Gavioli}
\address{\noindent Norberto Gavioli \hfill\break\indent 
	Department of Information Engineering, Computer Science and Mathematics\hfill\break\indent  University of L'Aquila
	\hfill\break\indent 
	67100 Coppito, L'Aquila, Italy
}
\email{norberto.gavioli@univaq.it}

\author[M. Iannaccone]{Martina Iannaccone}
\address{\noindent Martina Iannaccone \hfill\break\indent 
	Department of Information Engineering, Computer Science and Mathematics\hfill\break\indent  University of L'Aquila
	\hfill\break\indent 
	67100 Coppito, L'Aquila, Italy
}
\email{martina.iannaccone@graduate.univaq.it}

\author[G. Nozzi]{Giuseppe Nozzi}
\address{\noindent Giuseppe Nozzi \hfill\break\indent 
	Department of Information Engineering, Computer Science and Mathematics\hfill\break\indent  University of L'Aquila
	\hfill\break\indent 
	67100 Coppito, L'Aquila, Italy
}
\email{giuseppe.nozzi@graduate.univaq.it}

\begin{document}
	\subjclass[2020]{20E18, 20E22, 20F18, 16Y30, 16T25} 
\keywords{Skew braces, Nottingham group,  Triangular functions, Wreath products, Nilpotent groups, Near-rings}

\thanks{All the authors are members of INdAM--GNSAGA}
\begin{abstract}
	This paper adapts Rump's correspondence between radical rings and braces to a more general setting, utilizing a suitable class of near-rings. Given a right near-ring $(R,+,\circbin)$ with a multiplicative identity, we define a new operation that yields a monoid. Under natural compatibility conditions expressed via a filtration, this monoid becomes a topological group, yielding a topological right skew brace. This generalization recovers radical-ring braces and successfully applies to near-rings of maps under composition. We provide several explicit applications, demonstrating that the Nottingham group, groups of triangular functions, iterated wreath products of arbitrary groups, and groups of IA-automorphisms of free nilpotent groups naturally arise as multiplicative groups of such topological skew braces.
\end{abstract}

\noindent

\maketitle
\thispagestyle{empty}

\section{Introduction}\label{sec:intro}

In his seminal paper~\cite{Rump2007}, Rump introduced the notion of a \emph{brace},
exhibiting a deep connection between these algebraic structures and
set-theoretic solutions of the Yang--Baxter equation, whose set-theoretic version goes back to Drinfeld~\cite{Drinfeld}
and was first investigated in~\cites{Gateva, Etingof, Cedo}. A
cornerstone of that theory is a bijective correspondence between
two-sided braces and radical rings: given a ring $(R,+,\cdot)$ that is
radical with respect to the circle operation $a\circbin b = a+b+ab$, the
pair $(R,+,\circbin)$ carries a natural brace structure. Since then, braces
--- and their non-abelian generalization, \emph{skew braces},
see e.g.\ Guarnieri and Vendramin~\cite{Guarnieri2017} and Vendramin's recent survey~\cite{Vendramin2024} ---
have become a central tool for constructing and classifying solutions
of the Yang--Baxter equation.

The present paper adapts Rump's construction to a considerably more
general setting, replacing radical rings by a suitable class of
near-rings~\cite{Meldrum1985}, algebraic structures which are closely related to composition rings~\cite{adler}. Given a right near-ring
$(R,+,\circbin)$ with multiplicative identity $\Id$, one may define
on $R$ the operation
\[
f \ast g \defeq (\Id+f)\circbin(\Id+g)-\Id,
\]
which is always associative and turns $R$ into a monoid with identity
$0$. In Section~\ref{sec:general} we isolate a
set of natural compatibility conditions --- expressed in terms of a
filtration $\{T_i\}_{i\ge1}$ of a sub-near ring $T$ of $R$ --- under which
$(T,\ast)$ is in fact a topological group, and $(T,+,\ast)$ is a topological right
	skew brace (see~\cite{Mondal2026}). The
key technical device is a difference operator
$\Delta_g(f) \defeq f\circbin(\Id+g)-f$, measuring the failure of
the left distributive law; its interaction with
the filtration is what forces convergence of the $\ast$-inverses and
continuity of both operations.

This construction recovers Rump's radical-ring braces as a special
case, but its full generality lies in producing brace structures on
near-rings of maps under composition, where no ring multiplication is
available. We illustrate this in Section~\ref{sec:nottingham} with the ring of formal
power series $D[[x]]$ over a commutative domain $D$: the $x$-adically
topologized near-ring $T = x^2D[[x]]$ under composition is compatible,
and the resulting topological brace $(T,+,\ast)$ is isomorphic, via
$f \mapsto x+f$, to the classical \emph{Nottingham group}~\cite{Camina2000}
--- thus placing this well-studied group --- a pro-$p$ group when $D$ is a finite
	field of characteristic $p$ naturally within the
theory of (topological) braces.

In Section~\ref{sec:triangular} we turn to a second, purely combinatorial family of
examples: \emph{triangular functions} on (possibly infinite) products
of an abelian group $A$, i.e., vector functions whose $h$-th coordinate
depends only on the preceding coordinates (see also Klimov and Shamir~\cite{Klimov2002} for their cryptographic applications, under the name of \emph{T-functions}). We show that finite,
finitary, positive and negative triangular functions each carry a
natural near-ring structure under composition, fitting into our
general framework via appropriate filtrations; in the finite and
positive cases this yields new families of (topological) right braces,
closely related to iterated wreath products~\cites{Baumslag1959,Aragona2026,Aragona2026a, kal1, kal2,kal3,meldrum},
while the negative case produces only a monoid with respect to $\ast$,
exhibiting a natural obstruction to invertibility.

Finally, in Sections~\ref{sec:wreath} and~\ref{sec:ia-endo} we explore the non-abelian setting. We
first show that iterated wreath products of arbitrary groups fit into our
construction, yielding topological skew braces with non-abelian additive group and
	discrete topology. We then treat a more substantial example arising
from free nilpotent groups: identifying the set of IA-endomorphisms (see~\cite{Bachmuth1965})
of a free nilpotent group $F_{n,c}$ with $\Id+T$ for a suitable \emph{left} near-ring $T$ of endomorphisms, and using the Andreadakis filtration
together with a classical result of Andreadakis~\cite{Andreadakis1965}, we show that
$T$ is compatible, so that $(T,+,\ast)$ is a topological left skew brace ---
realizing the group of IA-automorphisms of $F_{n,c}$ as the
multiplicative group of a skew brace with discrete topology.

Throughout, our aim is to show that the near-ring-theoretic viewpoint
not only reproduces the classical radical-ring construction of braces,
but provides a flexible and unifying framework in which several
previously unrelated groups of transformations --- the Nottingham
group, iterated wreath products, and groups of IA-automorphisms of
free nilpotent groups --- arise as multiplicative groups of
(topological) skew braces.

	\section{Preliminaries}
	In this section we collect the basic notions on skew braces (see
	e.g.~\cites{Guarnieri2017,Vendramin2024}) and near-rings (see e.g.~\cite{Meldrum1985}) that we shall use
	throughout the paper.
	
	\begin{definition}\label{def:near-ring}
		A \emph{right near-ring} is a triple $(R,+,\circbin)$, where $(R,+)$ is a
		(not necessarily abelian) group with identity element $0$,
		$(R,\circbin)$ is a semigroup, and the right distributive law
		\[
		(f+g)\circbin h=f\circbin h+g\circbin h
		\]
		holds for all $f,g,h\in R$. If $(R,\circbin)$ is a monoid, we say that
		$R$ is a right near-ring \emph{with identity}, and we denote the identity
		element of $(R,\circbin)$ by $\Id$.
	\end{definition}
	
	We shall refer to $\circbin$ as the \emph{composition}. Note that
	$0\circbin f=0$ for every $f\in R$, since
	$0\circbin f=(0+0)\circbin f=0\circbin f+0\circbin f$; on the other hand,
	$f\circbin 0$ need not be $0$. The prototypical example is the set $M(G)$ of
	all maps from a group $(G,+)$ to itself, with pointwise addition and
	composition of maps.
	
	\begin{definition}\label{def:skew-brace}
		A \emph{right skew brace} is a set $B$ endowed with two operations $+$
		and $\ast$ such that $(B,+)$ and $(B,\ast)$ are groups and the
		compatibility condition
		\[
		(a+b)\ast c=a\ast c-c+b\ast c
		\]
		holds for all $a,b,c\in B$. If $(B,+)$ is abelian, we say that $B$ is a
		right \emph{brace} (of abelian type).
	\end{definition}
	
	The group $(B,+)$ is called the \emph{additive group} and $(B,\ast)$ the
	\emph{multiplicative group} of the skew brace. The two groups share the same
	identity element: taking $a=b=0$ in the compatibility condition yields
	$0\ast c=0\ast c-c+0\ast c$, hence $0\ast c=c$ for every $c\in B$, so that
	$0$ is the identity of $(B,\ast)$.

	\begin{example}\label{ex:radical}
		If $(R,+,\cdot)$ is a radical ring, then $R$ with the operation
		$a\ast b\mathrel{:=}a+b+a\cdot b$ is a brace of abelian type~\cite{Rump2007}.
	\end{example}
	
	\begin{definition}[See~\cite{Mondal2026}]\label{def:top-skew-brace}
		A \emph{topological right skew brace} is a right skew brace $B$ endowed
		with a topology such that both $(B,+)$ and $(B,\ast)$ are topological
		groups.
           \end{definition}

\section{A general construction}\label{sec:general}
In this section we provide a general construction of skew braces starting
from near-rings. This procedure generalizes the
	construction of braces from radical rings introduced in
	\cite{Rump2007}.

Throughout, $(R,+,\circbin)$ is a \emph{right} near-ring with identity
$\Id$, i.e.\ $(R,+)$ is a (not necessarily abelian) group, $(R,\circbin)$ is
a monoid with identity $\Id$, and the right distributive law
\[(a+b)\circbin c=a\circbin c+b\circbin c\]
holds for all $a,b,c\in R$. We write $a-b\defeq a+(-b)$. Note that
$0\circbin a=0$ for every $a\in R$, since
$0\circbin a=(0+0)\circbin a=0\circbin a+0\circbin a$.

Let $T$ be a subgroup of $(R,+)$ and let $\{T_i\}_{i\ge1}$ be a filtration of \(T\), i.e.\ a sequence of subgroups of
\((T,+)\) satisfying
\begin{equation}\label{eq:filtration}
	T=T_1\supseteq T_2\supseteq\cdots
	\quad\text{and}\quad \bigcap_{i\ge1}T_i=\{0\}.
\end{equation}
Furthermore, we require that for all \(i\ge 1\)
\begin{gather}
	T_i \trianglelefteq (T,+),	\label{eq:normality}\\
	T_i\circbin(\Id+T)\subseteq T_i	\label{eq:ast-closure-Ti}.
\end{gather}
Note that condition~\eqref{eq:normality} is automatic when $(T,+)$ is abelian.

The filtration $\{T_i\}_{i\ge1}$ induces a topology on $T$ by taking the
subgroups $T_i$ as a fundamental system of neighborhoods of $0$. For \(t,t'\in T\) with \(t\neq t'\), set \(s(t,t')=\max\{i
\mid -t+t'\in T_i\}\) (the maximum exists since \(\bigcap_{i\ge 1} T_i=\{0\}\)). This
topology is induced by the distance
\[
d(t,t')=\begin{cases}
	\exp(-s(t,t')) & \text{if } t\ne t',\\
	0        & \text{otherwise}.
\end{cases}
\]
\begin{lemma}\label{lem:metric}
	The distance $d$ on $(T,+)$ satisfies
	\begin{enumerate}
		\item $d(t,t')=d(t',t)$ and
		$d(t,t')\le\max\{d(t,z),d(z,t')\}$;
		\item $d(z+t,z+t')=d(t,t')=d(t+z,t'+z)$
	\end{enumerate}
	for all $t,t',z\in T$.
\end{lemma}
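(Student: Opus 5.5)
Everything reduces to a statement about the number $s(t,t')$, using only that each $T_i$ is a subgroup, that the $T_i$ are nested, and the normality condition~\eqref{eq:normality}. We may assume the points involved are pairwise distinct; when two of them coincide, the distance between them is $0$ and each claim is immediate. We use the convention $s(t,t)=+\infty$, so that $d(t,t)=0$ matches $\exp(-s)$. Since $\exp(-\cdot)$ is decreasing, $d(t,t')\le\max\{d(t,z),d(z,t')\}$ is equivalent to
\[
s(t,t')\ge\min\{s(t,z),s(z,t')\}.
\]
The nesting $T_1\supseteq T_2\supseteq\cdots$ gives the key reformulation: $-t+t'\in T_i$ if and only if $i\le s(t,t')$.

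For symmetry, note that $-t'+t=-(-t+t')$. A subgroup is closed under inverses, so $-t+t'\in T_i$ exactly when $-t'+t\in T_i$. Hence $s(t,t')=s(t',t)$.

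For the ultrametric inequality, let $i=\min\{s(t,z),s(z,t')\}$. By nesting, both $-t+z$ and $-z+t'$ lie in $T_i$. Their sum is $(-t+z)+(-z+t')=-t+t'$, which therefore lies in $T_i$. So $s(t,t')\ge i$, as required.

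For left invariance, observe that $-(z+t)+(z+t')=-t-z+z+t'=-t+t'$. Thus $s(z+t,z+t')=s(t,t')$ identically, and no hypothesis beyond associativity is needed.

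Right invariance is the only step where non-commutativity matters, and it is exactly where \eqref{eq:normality} is used. Here
\[
-(t+z)+(t'+z)=-z+(-t+t')+z,
\]
which is a conjugate of $-t+t'$. Because each $T_i$ is normal in $(T,+)$, we get $-t+t'\in T_i$ if and only if $-z+(-t+t')+z\in T_i$. Hence $s(t+z,t'+z)=s(t,t')$, and so $d(t+z,t'+z)=d(t,t')$.

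I expect no real obstacle. The only delicate point is remembering that right invariance genuinely requires the normality of the $T_i$.
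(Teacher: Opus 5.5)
Your proof is correct and follows the paper's argument essentially step for step: symmetry and the ultrametric inequality come from each $T_i$ being a subgroup via $-t+t'=(-t+z)+(-z+t')$, left invariance comes from cancellation, and right invariance comes from recognizing $-(t+z)+(t'+z)$ as the conjugate $-z+(-t+t')+z$ and invoking~\eqref{eq:normality}. Your convention $s(t,t)=+\infty$ and the reformulation $-t+t'\in T_i \iff i\le s(t,t')$ (from nesting) spell out details that the paper leaves tacit.
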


\begin{proof} 
	Symmetry and the ultrametric inequality both follow
		from the fact that each $T_i$ is a subgroup; for the latter, write
		$-t+t'=(-t+z)+(-z+t')$.
	
	The left invariance is immediate since $-(z+t)+(z+t')=-t+t'$. The right invariance follows by the normality of
	\((T_i,+)\) in \((T,+)\); indeed, the element
	\(
	-(t+z)+(t'+z)=-z+(-t+t')+z
	\)
	lies in \(T_i\) if and only if \(-t+t'\in T_i\).
\end{proof}
It immediately follows that the operation \(+\) is continuous in the topology induced by the filtration $\{T_i\}_{i\ge1}$; moreover, since $d$ is bi-invariant by Lemma~\ref{lem:metric}, inversion is an isometry.
In particular $(T,+)$ is a topological group in which every $T_i$ is a
clopen subgroup.

From now on we assume that $T$ is complete with respect to the metric $d$.
This is the case when the filtration is finite, i.e.\ $T_n=\{0\}$ for some
$n$, since the induced topology is the discrete one.

The following definition will be useful to construct a skew brace structure on $T$. We point out that the
set $T$ is closed under the maps $f\mapsto f\circbin(\Id+g)$ for $g\in T$. This is a consequence
of~\eqref{eq:ast-closure-Ti} with $i=1$.

\begin{definition}
	Let $f,g\in R$. We define the \emph{difference operator}
	$\Delta_g\colon R\to R$ by
	\[
	\Delta_g(f)\defeq f\circbin(\Id+g)-f .
	\]
	More generally, for $f,f',g\in R$ we set
	\[
	\Delta_g(f,f')\defeq f\circbin(\Id+f'+g)-f\circbin(\Id+f'),
	\]
	so that $\Delta_g(f)=\Delta_g(f,0)$.
\end{definition}

The following equality holds for all \(f,f',g\in R\) and we shall use this fact without further mention:
\[
\Delta_g(f,f')=\Delta_{f'+g}(f)-\Delta_{f'}(f).
\]
\begin{remark}\label{rem:iterated-delta}
	If \((T,+)\) is abelian, by iterating the difference operator we obtain that, for all \(f,g\in R\) and
	\(n\in \N\), the following equality holds
	\begin{equation}\label{eq:delta}
		\Delta^{n}_g(f)= \sum_{i=0}^{n} (-1)^i \binom{n}{i} f\circbin (\Id+g)^{n-i},
	\end{equation}
	where \(\Delta^n_g(f)=\Delta_g(\Delta_g^{n-1}(f))\), \(\Delta^0_g(f)=f\), and the powers of $(\Id+g)$ are taken with respect to $\circbin$.
\end{remark}
We introduce a new operation \(\ast\) on \(R\). For \(f,g\in R\) we define
\begin{equation}\label{eq:ast}
	f\ast g\defeq g+f\circbin(\Id+g),
\end{equation}
which is the unique element satisfying
\begin{equation}\label{eq:ast-via-Id}
	\Id+(f\ast g)=(\Id+f)\circbin(\Id+g).
\end{equation}
Equivalently, using the difference operator we can write $f\ast g=g+\Delta_g(f)+f$.
\begin{proposition}\label{prop:monoid}
	$(T,\ast)$ is a monoid with identity element $0$.
\end{proposition}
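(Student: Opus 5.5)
The plan is to verify three things: closure of $T$ under $\ast$, associativity, and that $0$ is a two-sided identity. Associativity and the identity are best handled in all of $R$ through the characterization \eqref{eq:ast-via-Id}. Closure is the only point where the hypotheses on $T$ enter.

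\emph{Closure.} Let $f,g\in T$. By \eqref{eq:ast} we have $f\ast g=g+f\circbin(\Id+g)$. Condition \eqref{eq:ast-closure-Ti} with $i=1$ gives $T\circbin(\Id+T)\subseteq T$, so $f\circbin(\Id+g)\in T$. Since $T$ is a subgroup of $(R,+)$, it follows that $f\ast g\in T$. Note also that $0\in T$.

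\emph{Associativity.} Consider the map $\phi\colon R\to R$, $\phi(f)=\Id+f$. It is a bijection, since left translation in the group $(R,+)$ is bijective, with inverse $h\mapsto -\Id+h$. By \eqref{eq:ast-via-Id} we have $\phi(f\ast g)=\phi(f)\circbin\phi(g)$, so $\phi$ transports the operation $\ast$ to composition. For $f,g,h\in R$ this gives
\[
\phi((f\ast g)\ast h)=(\phi(f)\circbin\phi(g))\circbin\phi(h)=\phi(f)\circbin(\phi(g)\circbin\phi(h))=\phi(f\ast(g\ast h)).
\]
Injectivity of $\phi$ then yields associativity. The uniqueness claim in \eqref{eq:ast-via-Id} is also immediate, because $\Id+x=y$ has the unique solution $x=-\Id+y$; with $y=(\Id+f)\circbin(\Id+g)$ this solution is $-\Id+\Id\circbin(\Id+g)+f\circbin(\Id+g)=g+f\circbin(\Id+g)$ by right distributivity. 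A direct route would be to expand $(f\ast g)\ast h$ using right distributivity. Because $(R,+)$ need not be abelian, that expansion requires keeping track of the order of summands, and the transport argument above avoids this bookkeeping.

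\emph{Identity.} For the right identity, $f\ast 0=0+f\circbin\Id=f$. For the left identity, $0\ast g=g+0\circbin(\Id+g)=g+0=g$, using $0\circbin a=0$, which was established before the proposition. Equivalently, $\phi(0)=\Id$ is the identity of $(R,\circbin)$.

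None of these steps presents a real obstacle. The only subtle point is the non-commutativity of $+$. It is handled by working with the bijection $\phi$ rather than expanding sums, and by using only the right distributive law.
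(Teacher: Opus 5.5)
Your proof is correct and follows the paper's argument essentially verbatim: closure comes from \eqref{eq:ast-closure-Ti} with $i=1$, associativity is transported from $\circbin$ through the bijection $f\mapsto\Id+f$ via \eqref{eq:ast-via-Id}, and the identity checks use $f\circbin\Id=f$ and $0\circbin a=0$. The only cosmetic differences are that you run the transport argument on all of $R$ rather than on $T\to\Id+T$, and that you also verify the uniqueness statement in \eqref{eq:ast-via-Id}, which the paper asserts without proof.
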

\begin{proof}
	By Condition~\eqref{eq:ast-closure-Ti} with $i=1$ we have
	$f\circbin(\Id+g)\in T$ for all $f,g\in T$, hence $f\ast g\in T$. The map
	$f\mapsto\Id+f$ is a bijection $T\to\Id+T$ and
	by~\eqref{eq:ast-via-Id} it intertwines $\ast$ with $\circbin$, so that the
		associativity of $\ast$ follows from the associativity of $\circbin$.
	Finally $f\ast0=0+f\circbin\Id=f$ and
	$0\ast f=f+0\circbin(\Id+f)=f+0=f$.
\end{proof}
We now introduce a compatibility condition between the difference operator and the filtration, which ensures that
\((T,\ast)\) is a topological group.

\begin{definition}\label{def:compatible}
	We say that the difference operator $\Delta$ is \emph{compatible} with
	the filtration $\{T_i\}_{i\ge1}$ if
	\[
	\Delta_g(f,f')\in T_{i+1}
	\]
	for all $f,f'\in T$ and $g\in T_i$. In this case we say that $T$ is
	\emph{compatible}.
\end{definition}

\begin{proposition}\label{prop:phi-continuous}
	If $T$ is compatible, then the map $\varphi\colon T\times T\to T$,
	$\varphi(f,g)=f\circbin(\Id+g)$, is continuous.
\end{proposition}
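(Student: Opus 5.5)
We prove continuity of $\varphi$ at an arbitrary point $(f,g)\in T\times T$. The topology on $T\times T$ is the product topology, so we fix $i\ge1$ and perturb $f$ by an element of $T_i$ and $g$ by an element of $T_i$. The goal is to show that the value $\varphi$ changes by an element of $T_i$, measured on the left as in the metric $d$. Concretely, let $u,v\in T_i$. Since $d$ is left-invariant by Lemma~\ref{lem:metric}, it suffices to show
\[
-\bigl(f\circbin(\Id+g)\bigr)+(f+u)\circbin(\Id+g+v)\in T_i .
\]

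The key step is to split the change into two parts: first change $g$ to $g+v$, then change $f$ to $f+u$. By right distributivity,
\[
(f+u)\circbin(\Id+g+v)=f\circbin(\Id+g+v)+u\circbin(\Id+g+v).
\]
The second summand lies in $T_i$ by \eqref{eq:ast-closure-Ti}, because $u\in T_i$ and $g+v\in T$. For the first summand we use the definition of the difference operator with $f'=g$:
\[
f\circbin(\Id+g+v)=\Delta_v(f,g)+f\circbin(\Id+g).
\]
Here $\Delta_v(f,g)\in T_{i+1}\subseteq T_i$ by compatibility, since $f,g\in T$ and $v\in T_i$.

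Combining the two identities,
\[
-\bigl(f\circbin(\Id+g)\bigr)+(f+u)\circbin(\Id+g+v)
=-\bigl(f\circbin(\Id+g)\bigr)+\Delta_v(f,g)+f\circbin(\Id+g)+u\circbin(\Id+g+v).
\]
The first three terms form a conjugate of $\Delta_v(f,g)\in T_i$ by $f\circbin(\Id+g)\in T$. This conjugate lies in $T_i$ by the normality condition \eqref{eq:normality}. The last term is in $T_i$, and $T_i$ is a subgroup, so the whole expression lies in $T_i$.

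The only point requiring care is the non-commutativity of $+$. The perturbation $\Delta_v(f,g)$ appears on the wrong side of $f\circbin(\Id+g)$, and normality of $T_i$ in $T$ is exactly what allows us to move it across. Note also that $f\circbin(\Id+g)\in T$ by \eqref{eq:ast-closure-Ti} with $i=1$, which is needed for the normality argument to apply.

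The argument shows more than continuity: $\varphi$ maps the neighbourhood $(f+T_i)\times(g+T_i)$ into $\varphi(f,g)+T_i$. Hence $\varphi$ is in fact $1$-Lipschitz for the max metric on $T\times T$, and in particular continuous.
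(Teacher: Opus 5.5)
Your proof is correct and follows the same route as the paper. Both arguments split $(f+u)\circbin(\Id+g+v)$ by right distributivity and rewrite $f\circbin(\Id+g+v)$ as $\Delta_v(f,g)+f\circbin(\Id+g)$. Both then use normality of $T_i$ to absorb the conjugate of $\Delta_v(f,g)$, and \eqref{eq:ast-closure-Ti} to handle $u\circbin(\Id+g+v)$.
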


\begin{proof}
	Let $i\ge1$ and let $(f,g),(f',g')\in T\times T$ with
	$\max\{d(f,f'),d(g,g')\}\le\exp(-i)$. Then $f'=f+\delta_f$ and
	$g'=g+\delta_g$ with $\delta_f,\delta_g\in T_i$. We have that
	\begin{align*}
		\varphi(f',g')
		&=(f+\delta_f)\circbin(\Id+g+\delta_g)\\
		&=f\circbin(\Id+g+\delta_g)+\delta_f\circbin(\Id+g')\\
		&=\Delta_{\delta_g}(f,g)+\varphi(f,g)+\delta_f\circbin(\Id+g').
	\end{align*}
	Hence
	\[
	-\varphi(f,g)+\varphi(f',g')
	=\bigl(-\varphi(f,g)+\Delta_{\delta_g}(f,g)+\varphi(f,g)\bigr)
	+\delta_f\circbin(\Id+g').
	\]
	By the compatibility of $T$, we have $\Delta_{\delta_g}(f,g)\in T_{i+1}\subseteq T_i$, which implies that its
	conjugate \(-\varphi(f,g)+\Delta_{\delta_g}(f,g)+\varphi(f,g)\) also lies in $T_i$ by~\eqref{eq:normality}.
	Since $\delta_f\circbin(\Id+g')\in T_i$ by~\eqref{eq:ast-closure-Ti}, it follows that
	$d(\varphi(f,g),\varphi(f',g'))\le\exp(-i)$.
\end{proof}

\begin{corollary}\label{cor:ast-continuous}
	If $T$ is compatible, then
	$\ast\colon T\times T\to T$ is continuous.
\end{corollary}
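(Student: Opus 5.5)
The plan is to write $\ast$ as a composite of maps already known to be continuous. By definition~\eqref{eq:ast}, $f\ast g=g+\varphi(f,g)$, where $\varphi(f,g)=f\circbin(\Id+g)$ is the map of Proposition~\ref{prop:phi-continuous}. So $\ast$ factors as
\[
T\times T\longrightarrow T\times T\longrightarrow T,\qquad (f,g)\longmapsto \bigl(g,\varphi(f,g)\bigr)\longmapsto g+\varphi(f,g).
\]
Here $T\times T$ carries the product topology, which is the topology of the max-metric used in the proof of Proposition~\ref{prop:phi-continuous}.

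First, the map $(f,g)\mapsto(g,\varphi(f,g))$ is continuous. Its first component is a projection, and its second component is continuous by Proposition~\ref{prop:phi-continuous}, since $T$ is compatible. Second, addition $T\times T\to T$ is continuous, as observed right after Lemma~\ref{lem:metric}: $(T,+)$ is a topological group because $d$ is bi-invariant. A composite of continuous maps is continuous, and this gives the claim.

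If a quantitative version is preferred, we can check the estimate directly. Take $(f,g)$ and $(f',g')$ with $-f+f'$ and $-g+g'$ in $T_i$. Then $-\varphi(f,g)+\varphi(f',g')\in T_i$ by the proof of Proposition~\ref{prop:phi-continuous}. Using left and right invariance of $d$ (Lemma~\ref{lem:metric}) together with the ultrametric inequality, we get
\begin{align*}
d(f\ast g,\,f'\ast g') &\le \max\bigl\{d\bigl(g+\varphi(f,g),\,g'+\varphi(f,g)\bigr),\ d\bigl(g'+\varphi(f,g),\,g'+\varphi(f',g')\bigr)\bigr\}\\
&= \max\bigl\{d(g,g'),\ d\bigl(\varphi(f,g),\varphi(f',g')\bigr)\bigr\}\le \exp(-i).
\end{align*}

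There is no real obstacle here. The only point needing care is that $(T,+)$ may be non-abelian, so the two summands cannot be compared directly. Instead, each summand is changed separately, and each step uses the appropriate one-sided invariance of $d$; this is available because of the normality condition~\eqref{eq:normality}.
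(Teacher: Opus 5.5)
Your proof is correct and follows the paper's approach: the paper proves the corollary in one line as an immediate consequence of Lemma~\ref{lem:metric} (continuity of $+$ from the bi-invariance of $d$) and Proposition~\ref{prop:phi-continuous}, using $f\ast g=g+\varphi(f,g)$. Your quantitative estimate spells out that composition explicitly, and it correctly uses right invariance for the first step and left invariance for the second.
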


\begin{proof}
	It is an immediate consequence of Lemma~\ref{lem:metric} and Proposition~\ref{prop:phi-continuous}.
\end{proof}

\begin{proposition}\label{prop:Tastgroup}
	If $T$ is compatible, then $(T,\ast)$ is a group.
\end{proposition}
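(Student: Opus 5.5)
Since $(T,\ast)$ is a monoid by Proposition~\ref{prop:monoid}, it suffices to show that every $f\in T$ has a left $\ast$-inverse: in a monoid where every element has a left inverse, every element is invertible. The left inverse will be built by successive approximation. We look for $g$ with $g\ast f=0$, that is, $f+g\circbin(\Id+f)=0$. The approximations $g_i$ will satisfy $g_i\ast f\in T_i$, and we then pass to the limit using completeness and Corollary~\ref{cor:ast-continuous}.

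Start with $g_1=-f$, so that $g_1\ast f=f+(-f)\circbin(\Id+f)$. Right distributivity gives $(-f)\circbin h=-(f\circbin h)$, so $g_1\ast f=f-f\circbin(\Id+f)\in T=T_1$. For the inductive step, suppose $e_i\defeq g_i\ast f\in T_i$ and put $g_{i+1}\defeq(-e_i)\ast g_i$. Associativity gives $g_{i+1}\ast f=(-e_i)\ast e_i$, and
\[
(-e_i)\ast e_i=e_i+(-e_i)\circbin(\Id+e_i)=e_i-e_i\circbin(\Id+e_i)=e_i-\bigl(\Delta_{e_i}(e_i)+e_i\bigr).
\]
Since $e_i\in T_i$, compatibility applied with $f=e_i$, $f'=0$, $g=e_i$ gives $\Delta_{e_i}(e_i)\in T_{i+1}$. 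Hence $(-e_i)\ast e_i=e_i-e_i-\Delta_{e_i}(e_i)=-\Delta_{e_i}(e_i)\in T_{i+1}$, which is exactly the required $g_{i+1}\ast f\in T_{i+1}$.

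Next, $(g_i)$ is Cauchy. We have $g_{i+1}=g_i+(-e_i)\circbin(\Id+g_i)$, and $(-e_i)\circbin(\Id+g_i)=-(e_i\circbin(\Id+g_i))\in T_i$ by~\eqref{eq:ast-closure-Ti}. So $-g_i+g_{i+1}\in T_i$, i.e.\ $d(g_i,g_{i+1})\le\exp(-i)$, and the ultrametric inequality of Lemma~\ref{lem:metric} shows the sequence is Cauchy. By completeness $g_i\to g\in T$. By Corollary~\ref{cor:ast-continuous}, $g\ast f=\lim g_i\ast f=\lim e_i=0$, because $e_i\in T_i$ and $\bigcap T_i=\{0\}$. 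Thus $g$ is a left inverse of $f$.

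Finally, every element of the monoid has a left inverse. If $g\ast f=0$ and $h\ast g=0$, then $f=(h\ast g)\ast f=h\ast(g\ast f)=h$, so $g$ is also a right inverse of $f$, and $(T,\ast)$ is a group. The only delicate point is the computation $(-e)\circbin(\Id+e)=-(e\circbin(\Id+e))$ together with the identification of $e\circbin(\Id+e)-e$ as $\Delta_e(e)$ in the possibly non-abelian group $(T,+)$. Both follow from right distributivity and the definition of $\Delta$, so I expect no real obstacle. The one modest point needing care is convergence, and that is handled by completeness together with continuity of $\ast$.
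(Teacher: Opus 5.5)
Your proof is correct and is the mirror image of the paper's. The paper builds a right inverse via $g_{j+1}=g_j\ast(-\delta_{j+1})$, so that $f\ast g_{j+1}=\delta_{j+1}\ast(-\delta_{j+1})$, while you build a left inverse via $g_{i+1}=(-e_i)\ast g_i$; both finish with the same Cauchy, completeness and continuity argument.

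Your side is slightly cleaner. The identities $(-e)\ast e=-\Delta_e(e)$ and $-g_i+g_{i+1}=(-e_i)\circbin(\Id+g_i)\in T_i$ need no appeal to normality~\eqref{eq:normality}, which the paper uses at each step. Moreover, the continuity you use is only in the left variable. That already follows from right distributivity and~\eqref{eq:ast-closure-Ti}, so you do not actually need Corollary~\ref{cor:ast-continuous}.
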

\begin{proof}
	By Proposition~\ref{prop:monoid}, it suffices to construct a right $\ast$-inverse
	for every $f\in T$, since a monoid in which every element admits a
		right inverse is a group.
	
	Fix $f\in T$ and set $g_1\defeq-f$. Setting \(\delta_2\defeq f\ast g_1\), we have
	\[
	\delta_2=-f+f\circbin(\Id-f)
	=-f+\Delta_{-f}(f)+f\in T_2,
	\]
	since $\Delta_{-f}(f)\in T_2$ by compatibility and $T_2$ is normal in $(T,+)$.
	
	Proceeding inductively, suppose we have found $g_j\in T$ such that
	$\delta_{j+1}\defeq f\ast g_j\in T_{j+1}$. We then define
	\[
	g_{j+1}\defeq g_j\ast(-\delta_{j+1}).
	\]
	By Equation~\eqref{eq:ast-via-Id}, it follows that
	\[
	\Id+(f\ast g_{j+1})
	=(\Id+f)\circbin(\Id+g_j)\circbin(\Id-\delta_{j+1})
	=(\Id+\delta_{j+1})\circbin(\Id-\delta_{j+1}),
	\]
	which yields
	\[
	\delta_{j+2}=f\ast g_{j+1}
	=\delta_{j+1}\ast(-\delta_{j+1})
	=-\delta_{j+1}+\Delta_{-\delta_{j+1}}(\delta_{j+1})+\delta_{j+1}
	\in T_{j+2},
	\]
	where we have applied the compatibility of \(T\) along with~\eqref{eq:normality}.
	Next, observe that
	\[
	g_{j+1}=g_j\ast(-\delta_{j+1})
	=-\delta_{j+1}+\Delta_{-\delta_{j+1}}(g_j)+g_j,
	\]
	which implies
	$g_{j+1}-g_j=-\delta_{j+1}+\Delta_{-\delta_{j+1}}(g_j)\in T_{j+1}$.
	Thus, by~\eqref{eq:normality}, we also have $-g_j+g_{j+1}\in T_{j+1}$.
	For $i<j$, the telescoping identity
	\[
	-g_i+g_j=(-g_i+g_{i+1})+\cdots+(-g_{j-1}+g_j)\in T_{i+1}
	\]
	shows that $d(g_i,g_j)\le\exp(-(i+1))$, meaning that the sequence $\{g_j\}_{j\ge1}$
	is Cauchy. By completeness, the limit $g\defeq\lim_{j\to\infty}g_j$ exists.
	Finally, Corollary~\ref{cor:ast-continuous} ensures that
	\[
	f\ast g=f\ast\lim_{j\to\infty}g_j
	=\lim_{j\to\infty}(f\ast g_j)
	=\lim_{j\to\infty}\delta_{j+1}=0. \qedhere
	\]
\end{proof}
From now on, we denote by $(f)^{-1}_\ast$ the inverse of $f\in T$ in the group $(T,\ast)$.
\begin{remark}\label{rem:inverseformula}
	Assume that $T$ is compatible and $(T,+)$ is abelian. Whenever
	$\lim_{i\to\infty}\Delta^i_f(f)=0$, the $\ast$-inverse of $f$ is given
	by the closed formula
	\[
	s(f)=\sum_{i=0}^{\infty}(-1)^{i+1}\Delta^i_f(f),
	\]
	since, by the continuity and additivity of \(\Delta_f\) (the latter
		following from the right distributive law) and completeness of \(T\), we have that
	\begin{align*}
		s(f)\ast f&= f+s(f)+\Delta_f(s(f))\\
		&= f+s(f)+\sum_{i\ge0}(-1)^{i+1}\Delta^{i+1}_f(f)\\
		&= f+s(f)-f-s(f)=0.
	\end{align*}
\end{remark}

\begin{proposition}\label{prop:cosets}
	Assume that $T$ is compatible. Then for every $i\ge1$:
	\begin{enumerate}
		\item $T_i$ is a normal subgroup of $(T,\ast)$;
		\item $f\ast T_i=f+T_i$ for every $f\in T$.
	\end{enumerate}
\end{proposition}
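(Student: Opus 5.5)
I would prove part (2) first and then derive part (1) from it. The basic formula is the one already recorded in the paper:
\[
f\ast t \;=\; t+\Delta_t(f)+f .
\]

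\emph{Part (2), inclusion $f\ast T_i\subseteq f+T_i$.} Fix $f\in T$ and $t\in T_i$. By compatibility (Definition~\ref{def:compatible}, with $f'=0$) we have $\Delta_t(f)\in T_{i+1}\subseteq T_i$. Hence $t+\Delta_t(f)\in T_i$. By normality~\eqref{eq:normality},
\[
f\ast t=f+\bigl(-f+(t+\Delta_t(f))+f\bigr)\in f+T_i .
\]

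\emph{Part (2), reverse inclusion.} The map $\psi_f\colon t\mapsto -f+f\ast t$ sends $T_i$ into $T_i$, and I would show it is onto $T_i$. I would use the approximation scheme from Proposition~\ref{prop:Tastgroup}. Given $u\in T_i$, set $t_0=u$. Write $\psi_f(t_j)=u+e_j$; we want $e_j\to 0$.

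The correction step uses the two-variable difference operator. For $t\in T_i$ and $s\in T_k$ with $k\ge i$,
\[
f\ast(t+s)=t+s+f\circ(\Id+t+s)=t+s+\Delta_s(f,t)+f\circ(\Id+t).
\]
Since $\Delta_s(f,t)\in T_{k+1}$, this gives
\[
\psi_f(t+s)\equiv \psi_f(t)+\text{(conjugate of $s$)} \pmod{T_{k+1}}.
\]
This congruence needs care with conjugations when $(T,+)$ is nonabelian; normality of $T_{k+1}$ handles it. So $\psi_f$ induces, on each quotient $T_k/T_{k+1}$, a bijection up to a fixed conjugation. Choosing $s_j\in T_{i+j}$ suitably then gives $e_{j+1}\in T_{i+j+1}$. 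The sequence $t_j$ is Cauchy, its limit $t\in T_i$ because $T_i$ is closed, and $\psi_f(t)=u$ by the continuity in Corollary~\ref{cor:ast-continuous}.

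A simpler alternative for the reverse inclusion, which I would try first, is to invert directly. Given $u\in T_i$, let $t=(f)^{-1}_\ast\ast(f+u)$. Then $f\ast t=f+u$, so it suffices to show $t\in T_i$. Write $h=(f)^{-1}_\ast$. The forward-inclusion argument, run for right translations, shows $g\ast h\in T_i$ whenever $g\equiv h' \pmod{T_i}$ with $h'\ast h=0$. Concretely,
\[
(f+u)\ast h=h+(f+u)\circ(\Id+h)=h+f\circ(\Id+h)+u\circ(\Id+h)=f\ast h+u\circ(\Id+h)=u\circ(\Id+h),
\]
using right distributivity and $f\ast h=0$. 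By~\eqref{eq:ast-closure-Ti}, $u\circ(\Id+h)\in T_i$. This route gives right-coset statements cleanly, avoids the approximation argument entirely, and is the approach I would actually use.

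\emph{Right cosets.} Right distributivity gives
\[
(f+u)\ast g=g+f\circ(\Id+g)+u\circ(\Id+g)=f\ast g+u\circ(\Id+g),
\]
so $(f+T_i)\ast g\subseteq f\ast g+T_i$. Applying this with $g$ replaced by its $\ast$-inverse yields equality, so $(f+T_i)\ast g=f\ast g+T_i$. Taking $f=0$ shows $T_i\ast g=g+T_i$.

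\emph{Subgroup property.} Closure of $T_i$ under $\ast$ follows from the forward inclusion with $f\in T_i$. For inverses: if $t\in T_i$ and $t\ast t'=0$, then $t'\in T_i$, because $0=t\ast t'\in t'+T_i$ by the right-coset formula with $f=t\in T_i$.

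\emph{Left cosets and normality.} Take $f=h$ in the computation above with $g=f$, i.e.\ $(h+u)\ast f=u\circ(\Id+f)$. As $u$ ranges over $T_i$, this shows
\[
f^{-1}_\ast\ast(f+T_i)\subseteq T_i,
\]
hence $f+T_i\subseteq f\ast T_i$, which is the reverse inclusion of (2). Combining the two cosets, $f\ast T_i=f+T_i=T_i+f$ by normality, and $T_i\ast f=f+T_i$. Thus left and right $\ast$-cosets of $T_i$ coincide, which gives (1).

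The main obstacle I expect is bookkeeping with nonabelian $+$ in $(T,+)$. Each additive rearrangement above must be checked, relying only on right distributivity and~\eqref{eq:normality}.
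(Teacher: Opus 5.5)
There is a genuine gap in the reverse inclusion $f+T_i\subseteq f\ast T_i$, and your ``left cosets and normality'' conclusion depends on it. With $h=(f)^{-1}_\ast$ you compute $(h+u)\ast f=u\circbin(\Id+f)\in T_i$ and conclude $h\ast(f+T_i)\subseteq T_i$. But these are different products:
\[
(h+u)\ast f=f+h\circbin(\Id+f)+u\circbin(\Id+f),\qquad h\ast(f+u)=(f+u)+h\circbin(\Id+f+u).
\]
In the first, $u$ is on the left of $\circbin$, so right distributivity splits it off. In the second, $u$ is inside the right argument, where no distributive law applies. Your identity only shows $(h+T_i)\ast f\subseteq T_i$, i.e.\ $h+T_i\subseteq T_i\ast h$, which is your right-coset statement again. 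The ``simpler alternative'' paragraph has the same mix-up: you need $h\ast(f+u)\in T_i$ but you compute $(f+u)\ast h$. Since $f+T_i=T_i\ast f$, the inclusion you want is equivalent to $(f)^{-1}_\ast\ast T_i\ast f\subseteq T_i$. Deducing this from right-coset information alone presupposes the normality you are trying to prove.

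The paper handles the term $h\circbin(\Id+f+\delta)$ with the two-variable form of compatibility. Using $h\circbin(\Id+f)=-f$,
\[
h\ast(f+\delta)=f+\delta+\Delta_\delta(h,f)+h\circbin(\Id+f)=f+\bigl(\delta+\Delta_\delta(h,f)\bigr)-f\in T_i .
\]
Alternatively, you can close the gap with what you have already proved correctly. Your forward inclusion and right-coset identity give $f\ast T_i\subseteq f+T_i=T_i\ast f$, i.e.\ $f\ast T_i\ast(f)^{-1}_\ast\subseteq T_i$, for \emph{every} $f\in T$. Applying this to $(f)^{-1}_\ast$ in place of $f$ gives the opposite inclusion, so $T_i\trianglelefteq(T,\ast)$ and $f\ast T_i=T_i\ast f=f+T_i$.

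Repaired this way, your route is leaner than the paper's. Apart from invoking Proposition~\ref{prop:Tastgroup}, it uses compatibility only with $f'=0$. It also gets closure of $T_i$ under $\ast$-inverses from $T_i\ast g\subseteq g+T_i$, instead of the limit/clopen argument. Your first, approximation-based sketch could also be completed, but you abandoned it without carrying it out.
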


\begin{proof}
	(1) Consider $f,g\in T_i$. We have that $f\ast g=g+f\circbin(\Id+g)\in T_i$
	by~\eqref{eq:ast-closure-Ti}, so $T_i$ is closed under $\ast$.
	Now consider the sequences $\{g_j\}$,
	$\{\delta_{j+1}\}$ of Proposition~\ref{prop:Tastgroup}. We claim that
	$g_j\in T_i$ for every $j$. Indeed $g_1=-f\in T_i$; and if
	$g_j\in T_i$, then $\delta_{j+1}=f\ast g_j\in T_i$ since $T_i$ is
	$\ast$-closed, hence $-\delta_{j+1}\in T_i$ and
	$g_{j+1}=g_j\ast(-\delta_{j+1})\in T_i$. Finally, since \(T_i\) is clopen and
	$g_j\to(f)^{-1}_\ast$, we have that
	$(f)^{-1}_\ast\in T_i$, showing that \((T_i, \ast)\) is a subgroup of \((T, \ast)\).
	
	Now suppose that $f\in T$, $g\in T_i$. Let $h=(f)^{-1}_\ast$.
	By Equation~\eqref{eq:ast-via-Id},
	\begin{align*}
		\Id+(h\ast g\ast f)
		&=(\Id+h)\circbin(\Id+g)\circbin(\Id+f)\\
		&=\Id+f+g\circbin(\Id+f)+h\circbin\bigl(\Id+f+g\circbin(\Id+f)\bigr).
	\end{align*}
	We set $w\defeq g\circbin(\Id+f)\in T_i$
	(by~\eqref{eq:ast-closure-Ti}). Then
	\[
	h\circbin(\Id+f+w)=\Delta_w(h,f)+h\circbin(\Id+f)
	=\Delta_w(h,f)-f,
	\]
	using $h\circbin(\Id+f)=-f$. Therefore
	\[
	h\ast g\ast f=f+\bigl(w+\Delta_w(h,f)\bigr)-f\in T_i,
	\]
	by compatibility and Condition~\eqref{eq:normality}.
	
	(2) Let $\delta\in T_i$. Then we obtain that
	$f\ast\delta=\delta+\Delta_\delta(f)+f$ with
	$\Delta_\delta(f)\in T_{i+1}\subseteq T_i$, and so
	$f\ast\delta\in T_i+f=f+T_i$ by~\eqref{eq:normality}. It follows that
	$f\ast T_i\subseteq f+T_i$. Conversely, for $\delta\in T_i$ setting
	$\delta'\defeq(f)^{-1}_\ast\ast(f+\delta)$ we get that
	\begin{align*}
		\delta'&=(f+\delta)+(f)^{-1}_\ast\circbin(\Id+f+\delta)\\
		&=f+\delta+\Delta_\delta((f)^{-1}_\ast,f)+(f)^{-1}_\ast\circbin(\Id+f)\\
		&=f+\bigl(\delta+\Delta_\delta((f)^{-1}_\ast,f)\bigr)-f\in T_i ,
	\end{align*}
	by compatibility and~\eqref{eq:normality}. Since
	$f\ast\delta'=f+\delta$, we get $f+T_i\subseteq f\ast T_i$.
\end{proof}
In particular, the additive and the multiplicative coset spaces of $T_i$ coincide and we have the following result.

\begin{corollary}\label{cor:T-top-group}
	If \(T\) is compatible, then the filtration \(\{T_i\}_{i\ge 1}\) induces
		the same topology on $(T,+)$ and on $(T,\ast)$; in particular, $(T,\ast)$ is a
		topological group.
\end{corollary}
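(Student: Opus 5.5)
The plan is to rely on Proposition~\ref{prop:cosets}, which already says that the additive and multiplicative cosets of each $T_i$ coincide. I would separate the statement into two claims. First, the same topology is induced on $(T,+)$ and on $(T,\ast)$. Second, $(T,\ast)$ is a topological group for that topology.

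For the first claim, consider the group topology on $(T,\ast)$ in which the normal subgroups $T_i$ (normal by Proposition~\ref{prop:cosets}(1)) form a fundamental system of neighbourhoods of $0$. In this topology the basic open neighbourhoods of a point $f$ are the left cosets $f\ast T_i$. The additive topology on $T$ has basic neighbourhoods $f+T_i$ of $f$, which are the balls of radius $\exp(-i)$ for the metric $d$. By Proposition~\ref{prop:cosets}(2) we have $f\ast T_i=f+T_i$ for every $f$ and $i$. So at each point the two systems of basic neighbourhoods are literally the same family of sets, and the two topologies coincide. In particular, the coset space $T/T_i$ is the same partition of $T$ in both cases.

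For the second claim, continuity of $\ast\colon T\times T\to T$ is Corollary~\ref{cor:ast-continuous}, so only continuity of $f\mapsto (f)^{-1}_\ast$ remains. I would use normality of $T_i$ in $(T,\ast)$. Suppose $f'\in f+T_i=f\ast T_i$, and write $f'=f\ast t$ with $t\in T_i$. Then
\[
(f')^{-1}_\ast=(t)^{-1}_\ast\ast(f)^{-1}_\ast\in T_i\ast(f)^{-1}_\ast=(f)^{-1}_\ast\ast T_i=(f)^{-1}_\ast+T_i .
\]
Here the middle equality uses normality from Proposition~\ref{prop:cosets}(1), and the last equality uses Proposition~\ref{prop:cosets}(2). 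Hence $d(f,f')\le\exp(-i)$ implies $d\bigl((f)^{-1}_\ast,(f')^{-1}_\ast\bigr)\le\exp(-i)$, so $\ast$-inversion is an isometry and is therefore continuous.

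The only delicate point is making sure the neighbourhood systems really match at every point and not only at $0$. This is exactly what Proposition~\ref{prop:cosets}(2) provides, because it holds for arbitrary $f$. Beyond that, the argument is bookkeeping.
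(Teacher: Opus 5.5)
Your proof is correct and follows the paper's argument. Proposition~\ref{prop:cosets} gives normality of the $T_i$ in $(T,\ast)$ and the equality $f\ast T_i=f+T_i$ for every $f$, so the two neighbourhood systems coincide at every point. Your explicit check that $\ast$-inversion is an isometry (via $(t)^{-1}_\ast\in T_i$, normality, and coset equality) spells out the standard fact, used implicitly in the paper, that a filtration by normal subgroups defines a group topology.
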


	\begin{proof}
		By Proposition~\ref{prop:cosets}, the subgroups $T_i$ are open and normal in
		$(T,\ast)$ and their $\ast$-cosets coincide with the additive cosets, hence
		they form a basis of the filtration topology for both groups. In particular,
		$(T,\ast)$ is a topological group.
\end{proof}

\begin{theorem}\label{thm:Tbrace}
	If $T$ is compatible, then $(T,+,\ast)$ is a topological right skew brace.
\end{theorem}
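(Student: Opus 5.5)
We need to check three things: $(T,+)$ is a group, $(T,\ast)$ is a group, and the right brace compatibility condition holds; both groups must also be topological groups.

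The group and topological parts are already in hand. $(T,+)$ is a subgroup of $(R,+)$, and by Lemma~\ref{lem:metric} and the remarks after it, it is a topological group for the filtration topology. By Proposition~\ref{prop:Tastgroup}, $(T,\ast)$ is a group when $T$ is compatible. By Corollary~\ref{cor:T-top-group}, it is a topological group for the same topology. So it remains only to verify the identity
\[
(a+b)\ast c=a\ast c-c+b\ast c
\]
for all $a,b,c\in T$.

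This identity should follow directly from the definition~\eqref{eq:ast} and right distributivity; no topology is needed. First expand the left side:
\[
(a+b)\ast c=c+(a+b)\circbin(\Id+c)=c+a\circbin(\Id+c)+b\circbin(\Id+c).
\]
Then compute the right side:
\[
a\ast c-c+b\ast c=c+a\circbin(\Id+c)-c+c+b\circbin(\Id+c)=c+a\circbin(\Id+c)+b\circbin(\Id+c).
\]
The two expressions agree. Only associativity in $(T,+)$ is used, not commutativity, so the argument works for non-abelian additive groups. All terms lie in $T$ by~\eqref{eq:ast-closure-Ti} with $i=1$.

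There is no real obstacle here. The only care needed is to keep the order of summands in the non-abelian group, writing $f\ast g$ as $g+f\circbin(\Id+g)$ with $g$ on the left, which is the order that makes the $-c+c$ cancellation work. Compatibility is used only through the earlier results that make $(T,\ast)$ a topological group.
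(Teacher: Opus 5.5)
Your proposal is correct and follows essentially the same route as the paper: you cite Lemma~\ref{lem:metric} and Corollary~\ref{cor:T-top-group} for the topological groups, then verify the brace identity by expanding via~\eqref{eq:ast} and right distributivity. The paper does the same cancellation, written as $h+\bigl(-h+(f\ast h)\bigr)+\bigl(-h+(g\ast h)\bigr)$, and like you it keeps the summands in order so that no commutativity is used.
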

\begin{proof}
	By Lemma~\ref{lem:metric} and Corollary~\ref{cor:T-top-group}, both
	$(T,+)$ and $(T,\ast)$ are topological groups with respect to the same filtration
	topology. It remains to verify the skew brace compatibility condition. For all \(f,g,h\in T\) we have that
	\begin{align*}
		(f+g)\ast h
		&=h+(f+g)\circbin(\Id+h)\\
		&=h+f\circbin(\Id+h)+g\circbin(\Id+h)\\
		&=h+\bigl(-h+(f\ast h)\bigr)+\bigl(-h+(g\ast h)\bigr)\\
		&=(f\ast h)-h+(g\ast h). \qedhere
	\end{align*}
\end{proof}
\begin{remark}\label{rem:left_near_ring}
	The whole construction of this section can be carried out when $(R,+,\circbin)$ is a \emph{left} near-ring, i.e.\ when the
	left distributive law $a\circbin(b+c)=a\circbin b+a\circbin c$ holds. In
	this case $a\circbin 0=0$ for every $a\in R$, and all the definitions have
	to be mirrored: the condition $T_i\circbin(\Id+T)\subseteq T_i$ becomes
	\[
	(\Id+T)\circbin T_i\subseteq T_i,
	\]
	the difference operators become
	\[
	\Delta_g(f)\defeq(\Id+g)\circbin f-f,
	\qquad
	\Delta_g(f,f')\defeq(\Id+f'+g)\circbin f-(\Id+f')\circbin f,
	\]
	and the operation $\ast$ is defined by
	\[
	f\ast g\defeq f+(\Id+f)\circbin g,
	\]
	which is again the unique element satisfying
	$\Id+(f\ast g)=(\Id+f)\circbin(\Id+g)$. With these changes, all the
	arguments of this section carry over verbatim, and one obtains that, if
	$T$ is compatible, then $(T,+,\ast)$ is a topological \emph{left} skew brace.
\end{remark}

\section{The Nottingham Group}\label{sec:nottingham}
Let $D[[x]]$ be the ring of formal power series over the commutative domain $D$,
whose elements are of the form
\[
f(x)=\sum_{n=0}^{\infty} a_nx^n, \qquad a_n \in D.
\]

\begin{definition}
	For \(f \in D[[x]]\), \(f\ne 0\), the \emph{\(x\)-adic order} \(\ord_x(f)\) of \(f\)
		is the largest integer \(n\ge 0\) such that \(x^n\) divides \(f\). The
		\emph{\(x\)-adic absolute value} of \(f\) is defined by
		\[
		|f|_x=2^{-\ord_x(f)}, \qquad |0|_x=0.
		\]
\end{definition}

The ring \(D[[x]]\) is naturally endowed with the topology induced by the \emph{\(x\)-adic
		ultrametric distance}
\[
d(f,g)=|f-g|_x.
\]

This topology can also be described as the one induced by the filtration
	\(\{x^n D[[x]]\}_{n\geq 0}\), taking these subgroups as a fundamental system of
	neighborhoods of \(0\); with respect to \(d\), the ring \(D[[x]]\) is a complete
	metric space.


The ideal \(R\defeq x D[[x]]\) is a \emph{right} near-ring under composition,
	with identity \(\Id=x\). As in Section~\ref{sec:general}, consider the additive
	subgroup \(T\defeq xR=x^2D[[x]]\) of \(R\), together with the decreasing
	\(x\)-adic filtration \(\{T_i\}_{i\geq 1}\), where
	\[
	T_i=x^iR= x^{i+1}D[[x]].
	\]
	Clearly \(T_1=T\), \(\bigcap_{i\ge 1}T_i=\{0\}\) and
	\(T_i \circbin (x+T) \subseteq T_i\) for all \(i \geq 1\), so that
	conditions~\eqref{eq:filtration} and~\eqref{eq:ast-closure-Ti} are satisfied,
	while condition~\eqref{eq:normality} is automatic since \((T,+)\) is abelian.
	Moreover, \(T\) is complete, being closed in \(D[[x]]\). In particular, by
	Proposition~\ref{prop:monoid}, \((T,\ast)\) is a monoid, where
	\[
	f\ast g=(x+f)\circbin(x+g)-x.
	\]


%



\begin{proposition}
The subgroup \(T\) is compatible.
\end{proposition}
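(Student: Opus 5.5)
We must show that for $f,f'\in T=x^2D[[x]]$ and $g\in T_i=x^{i+1}D[[x]]$,
\[
\Delta_g(f,f')=f\circbin(x+f'+g)-f\circbin(x+f')\in T_{i+1}=x^{i+2}D[[x]].
\]
The plan is to use additivity in $f$ to reduce to monomials, and then apply a binomial expansion.

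Since composition is right distributive and is continuous in the first argument for the $x$-adic topology, the map $f\mapsto \Delta_g(f,f')$ is additive and continuous. Write $f=\sum_{n\ge2}a_nx^n$ and set $u\defeq x+f'$. Then
\[
\Delta_g(f,f')=\sum_{n\ge 2}a_n\bigl((u+g)^n-u^n\bigr),
\]
where the sum converges because $\ord_x(u)=1$, so each term has order at least $n$. Because $T_{i+1}$ is a closed additive subgroup, it suffices to prove that $(u+g)^n-u^n\in x^{i+2}D[[x]]$ for every $n\ge2$.

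Inside the commutative ring $D[[x]]$ we have
\[
(u+g)^n-u^n=\sum_{k=1}^{n}\binom nk u^{n-k}g^k.
\]
We bound each term separately.
\begin{itemize}
\item For $k=1$, the term is $n\,u^{n-1}g$, whose order is at least $(n-1)+(i+1)\ge i+2$, since $n\ge2$.
\item For $k\ge2$, the order is at least $(n-k)+k(i+1)\ge 2(i+1)\ge i+2$.
\end{itemize}
Hence every term lies in $x^{i+2}D[[x]]$, and therefore so does $\Delta_g(f,f')$.

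The main point needing care is the justification of passing $\Delta_g(\cdot,f')$ through the infinite sum. This amounts to checking that $h\mapsto h\circbin v$, for $v\in xD[[x]]$, is additive (true by right distributivity, i.e.\ because substitution is a ring homomorphism) and continuous. Continuity holds because $\ord_x(h\circbin v)\ge\ord_x(h)$. Alternatively, the truncation can be handled directly. Write $f=f_N+r_N$ with $f_N$ a polynomial and $\ord_x(r_N)\ge N$ for $N\ge i+2$. Then both $r_N\circbin(x+f'+g)$ and $r_N\circbin(x+f')$ already lie in $x^{i+2}D[[x]]$, so only the finitely many monomials of $f_N$ need the estimate above. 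This second route avoids the limit argument entirely, and I would use it.

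Note that the key input is $f\in x^2D[[x]]$, i.e.\ $n\ge2$. For $n=1$ the difference would be exactly $g$, which lies only in $T_i$ and not in $T_{i+1}$; this is why $T$ and not $R$ is taken as the compatible subgroup.
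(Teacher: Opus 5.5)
Your proof is correct and follows essentially the same route as the paper: you expand $f$ as a power series and bound the $x$-adic order of $(x+f'+g)^n-(x+f')^n$, using divisibility by $g$ and the extra factor of $x$ that comes from $f\in x^2D[[x]]$. The differences are in bookkeeping only. The paper factors out $(x+f'+g)^2$ versus $(x+f')^2$ instead of expanding each monomial binomially, and it leaves the passage through the infinite sum implicit, whereas your truncation argument makes that step explicit.
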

\begin{proof}
	Let \(f=x^2\sum_{k\geq 0}a_kx^k\) and \(f'=x^2\sum_{k\geq 0}b_kx^k\) be elements
		of \(T\), and let \(g=x^{i+1}\sum_{k\geq 0}c_kx^k\in T_i\), with \(i\ge 1\). Then
		\begin{align*}
			\Delta_g(f,f')&=f\circbin(x+f'+g)- f\circbin(x+f') \\
			&= (x+f'+g)^2\sum_{k\geq 0}a_k(x+f'+g)^k-(x+f')^2\sum_{k\geq 0}a_k(x+f')^k  \\
			&\equiv (x+f')^2\Bigl(\sum_{k\geq 0}a_k(x+f'+g)^k-\sum_{k\geq 0}a_k(x+f')^k\Bigr)\\
			&\equiv (x+f')^2\Bigl(\sum_{k\geq 0}a_k(x+f')^k-\sum_{k\geq 0}a_k(x+f')^k\Bigr)\\
			&\equiv 0 \pmod{x^{i+2}},
		\end{align*}
		where the first congruence follows from
		\((x+f'+g)^2-(x+f')^2=2(x+f')g+g^2\equiv 0 \pmod{x^{i+2}}\), and the second one
		from the fact that \((x+f'+g)^k-(x+f')^k\) is divisible by \(g\) for all
		\(k\ge 1\), so that each such term, multiplied by \((x+f')^2\), lies in
		\(x^{i+3}D[[x]]\). Hence \(\Delta_g(f,f')\in x^{i+2}D[[x]]=T_{i+1}\).\qedhere
\end{proof}

By Theorem~\ref{thm:Tbrace}, the triple \((T,+,\ast)\) is a topological right
	brace. Moreover, since \((x+f)^k\equiv x^k \pmod{x^{k+1}}\) for every \(k\ge 1\),
	we have \(\ord_x(\Delta_f(h))>\ord_x(h)\) for all \(h\in T\), hence
	\(\lim_{i \to \infty}\Delta^i_f(f) = 0\) and Remark~\ref{rem:inverseformula}
	provides the closed formula
	\[
	(f)^{-1}_\ast=\sum_{i=0}^{\infty}(-1)^{i+1}\Delta^i_f(f).
	\]


We point out that \((T,\ast)\) is isomorphic to the Nottingham group over
\(D\) (see, e.g., \cite{Camina2000})
\[
\mathcal{N}(D) = \bigl\{ x+ x^2f \mid f \in D[[x]] \bigr\},
\]
whose operation is composition, via the map \(x^2f \mapsto x+x^2 f\). The same
	map endows \(\mathcal{N}(D)\) with the structure of a topological right
	brace.

	\section{Triangular Functions}\label{sec:triangular}
	
	Let \(A\) be an abelian group and let \(A_i^j \defeq\prod_{k=i}^j A\), with possibly \(i=-\infty\) and \(j=\infty\). We set
	\(A_i^j=\Set{0}\) if \(j<i\). For simplicity of notation we will denote by \(x_i^j=(x_i, \dots, x_j)\in A_i^j\) the
	truncation of the vector \(x\in A_{-\infty}^\infty\) to the interval of coordinates \([i,j]\). We denote by \(f_h\) the
	\(h\)-th component of a vector function \(f\). We define
	\begin{align}
		V_i^j&\defeq\Set{ f\colon A_i^j \to A},\notag\\
		U_i^j&\defeq \left\{f\colon A_i^j \to A_i^j \;\middle| \; f_h \in V_i^{h-1} \text{ for all } i\leq h \leq j\right\},\notag\\
		S_i &\defeq
		\left\{
		f\in U_{-\infty}^\infty \mid
		f_h=0 \text{ for } h\neq i
		\right\}. \label{eq:Si}
	\end{align}
	Moreover, we let \(T_i^j\) be an additive subgroup of \(U_i^j\) such that
		\(T_i^j=\prod_{h=i}^j \bigl(S_h\cap T_i^j\bigr)\).
	We point out that \(T_{i}^j\) can be equal to the whole group \(U_i^j=\prod_{h=i}^j S_h\). We use the symbol \(\Id\) to
	denote the identity function. By abuse of notation we consider \(T_{i}^j \subseteq T_{h}^k\) if \(h\le i\) and \(k\ge
	j\), by identifying two functions that are equal over their support. Note that under these conditions we have \(T_{i}^j \le
	T_{h}^k\). For the sake of simplicity of notation we also set \(T\defeq T_{-\infty}^\infty\), \(T_i\defeq T_i^\infty\) and
	\(T^i\defeq T_{-\infty}^i\). The functions in \(T\) are also known as \emph{triangular functions}, i.e.\ vector functions
	\(f\) in which each component \(f_h\) does not depend on the variables \(x_k\) with \(k\ge h\) (see also \cite{Klimov2002}, 
	where these functions were introduced for cryptographic applications).
	
	For \(f,g\in T_i^j\) we define
	\begin{equation}\label{eq:ast-tf}
		f\ast g \defeq(\Id+f) \circbin (\Id+g)-\Id=\oneplus{f}\circbin\oneplus{g}-\Id,
	\end{equation}
	where \(\oneplus f=\Id+f\), in accordance with~\eqref{eq:ast-via-Id}.
	Under this operation \((T_i^j, \ast)\) is a monoid with identity \(0\), isomorphic to \((\Id+T_i^j, \circbin)\).
	
	\subsection{Finite Triangular Functions}
For \(1\le i\le n\) and \(y\in A\), we denote by \(ye_i\in A_1^n\) the vector whose
		\(i\)-th component is \(y\) and whose other components are \(0\), so that every
		\(x\in A_1^n\) can be written as \(x=x_1e_1+\dots+x_ne_n\).
	We define
	\[
	\mB_i \defeq\{\oneplus g  \mid g \in S_i\cap T_1^n\},\qquad 1\le i \le n.
	\]
	Note that \(\oneplus{-g}\) is the inverse function of \(\oneplus{g}\in \mB_i\). Indeed,
	since \(g(x+ye_i)=g(x)\) for all \(x\in A_1^n\) and all \(y\in A\), we have \((\Id+g)\circbin (\Id-g)=\Id\). In particular
	every function \(\oneplus{g}\in \mB_i\) is invertible, and \(\mB_i\) is a subgroup of the group of invertible
		functions from \(A_1^n\) to itself under composition.
	
	\begin{lemma}
		\([\mB_i,\mB_j] \le \mB_{\max(i,j)}\) for all \(1\le i,j\le n\).
	\end{lemma}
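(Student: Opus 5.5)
We compute the commutator directly. Take $\oneplus{g}\in\mB_i$ and $\oneplus{h}\in\mB_j$ with $g\in S_i\cap T_1^n$ and $h\in S_j\cap T_1^n$. If $i=j$ there is nothing to prove: since $g(x+ye_i)=g(x)$ and $h(x+ye_i)=h(x)$, we get $\oneplus g\circbin\oneplus h=\oneplus{g+h}$, so $\mB_i$ is abelian and the commutator is trivial.

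By symmetry (the commutator of the inverse pair is the inverse) we may assume $i<j$, and aim to show that $[\oneplus g,\oneplus h]\in\mB_j$. We use the inverses $\oneplus{-g}$ and $\oneplus{-h}$ noted above and evaluate
\[
c\defeq\oneplus{-g}\circbin\oneplus{-h}\circbin\oneplus g\circbin\oneplus h
\]
at a point $x\in A_1^n$. Two facts do the work. First, $g$ is supported on coordinate $i$ and depends only on $x_1,\dots,x_{i-1}$. Second, $h$ changes only coordinate $j>i$ and depends only on $x_1,\dots,x_{j-1}$.

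Track the composition step by step:
\begin{enumerate}
\item Applying $\oneplus h$ changes coordinate $j$ only.
\item Applying $\oneplus g$ next adds $g(x)e_i$. The value $g$ is unaffected by step 1, since $g$ reads only coordinates $<i<j$. This changes coordinate $i$, which $h$ may read.
\item Applying $\oneplus{-h}$ subtracts $h$ evaluated at the point $x+h(x)e_j+g(x)e_i$, which equals $h(x+g(x)e_i)$.
\item Applying $\oneplus{-g}$ subtracts $g(x)e_i$ again; its argument still agrees with $x$ on coordinates $<i$.
\end{enumerate}
Hence
\[
c(x)=x+\bigl(h(x)-h(x+g(x)e_i)\bigr)e_j .
\]
So $c=\Id+k$ with $k\in S_j$, and $k_j(x)=h(x)-h(x+g(x)e_i)$ depends only on $x_1,\dots,x_{j-1}$, because $i<j$. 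Thus $k\in U_1^n\cap S_j$.

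The main obstacle is verifying that $k$ lies in $T_1^n$, not merely in $U_1^n$. The set $T_1^n$ is only assumed to be an additive subgroup, and $k$ is built from $h$ composed with $\oneplus g$. We write $k=h-h\circbin\oneplus g$, where $h\circbin\oneplus g=h\circbin(\Id+g)$. We would therefore need the closure property $T\circbin(\Id+T)\subseteq T$, that is, condition~\eqref{eq:ast-closure-Ti} with $i=1$, which the framework of Section~\ref{sec:general} imposes. We would invoke it, or assume $T_1^n=U_1^n$ if it is not yet available. Granting this, $h\circbin(\Id+g)\in T_1^n\cap S_j$, hence $k\in S_j\cap T_1^n$ and $c\in\mB_j=\mB_{\max(i,j)}$. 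Since $\mB_j$ is a subgroup, the subgroup generated by all such commutators also lies in $\mB_j$, which gives $[\mB_i,\mB_j]\le\mB_{\max(i,j)}$.
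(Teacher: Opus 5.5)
Your proof is correct and follows the paper's route. Both evaluate the commutator directly and show it equals $\Id$ plus a function supported on the larger index: in your convention this is $k=-\Delta_g(h)$, which matches the paper's $[\oneplus g,\oneplus h]=\oneplus{-\Delta_{-h}g}$ once the roles of $i,j$ and the commutator convention are swapped.

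You also explicitly invoke $T_1^n\circbin(\Id+T_1^n)\subseteq T_1^n$ to land in $T_1^n$ rather than merely $U_1^n$. The paper uses this tacitly; it is already implicit in its assertion that $(T_i^j,\ast)$ is a monoid. So your version is, if anything, more careful than the original.
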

	
	\begin{proof}
	If \(i=j\) the claim is trivial, since \(\mB_i\) is abelian; hence assume \(j<i\).
		Let \(x=x_1e_1+\dots+x_ne_n\in A_1^n\). Consider
		\(\oneplus g\in \mB_i\) and \(\oneplus h\in \mB_j\). Writing
			\([a,b]=a\circbin b\circbin a^{-1}\circbin b^{-1}\), an easy calculation
			shows that
			\begin{multline*}
				[\oneplus g,\oneplus h](x)=x+g(x_1e_1+\dots+x_{i-1}e_{i-1})\\
				-g\bigl(x_1e_1+\dots+x_{j-1}e_{j-1}+x_je_j-h(x_1e_1+\dots+x_{j-1}e_{j-1})
				+x_{j+1}e_{j+1}+\dots+x_{i-1}e_{i-1}\bigr),
			\end{multline*}
			that is, \([\oneplus g,\oneplus h]=\Id-\Delta_{-h}g=\oneplus{-\Delta_{-h}g} \in \mB_i\).
	\end{proof}
	\begin{remark}\label{rem:wreath}
		Note that \(W_n\defeq\mB_1 \ltimes \cdots \ltimes \mB_n=\Id+T_{1}^n\). When
			\(T_1^n=U_1^n\), the group \(W_n\) is the iterated wreath product
			\(\underbrace{A\wr\dots \wr A}_{\text{\(n\) times}}\), where \(\mB_i\) is the
			\(i\)-th base subgroup (see also~\cite{Aragona2026}).
	\end{remark}
	
	Observe that \(\Id \notin T_1^n\) and that $T_1^n$ is a subnear-ring of the right near-ring of functions from \(A_1^n\)
	to \(A_1^n\) under pointwise sum and composition. We now introduce a filtration which fits into the general framework discussed
	above.
	
	Consider the descending chain \(\{T_i^n\}_{1\le i \le n}\) of additive
		sub-near-rings of \(T_1^n\), extended by setting \(T_i^n=\{0\}\) for \(i>n\).
		Note that
		\begin{gather*}
			\bigcap_{i\geq 1}T_i^n=\{0\},\\
			T_i^n \circbin (\Id+T_1^n) \subseteq T_i^n \quad\text{for all } i\ge 1,
		\end{gather*}
		so that conditions~\eqref{eq:filtration} and~\eqref{eq:ast-closure-Ti} hold, while
		condition~\eqref{eq:normality} is automatic since \((T_1^n,+)\) is abelian. This
		finite filtration is therefore a particular case of the general framework of
		Section~\ref{sec:general}; the induced topology is discrete, and in particular
		\(T_1^n\) is complete.
	\begin{proposition}\label{prop:T1ncomp}
		The near-ring \(T_1^n\) is compatible.
	\end{proposition}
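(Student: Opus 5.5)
We must show that $\Delta_g(f,f')=f\circbin(\Id+f'+g)-f\circbin(\Id+f')$ lies in $T_{i+1}^n$ for all $f,f'\in T_1^n$ and $g\in T_i^n$. The filtration is finite, and $T_j^n=\{0\}$ for $j>n$. Since $T_1^n=\prod_h (S_h\cap T_1^n)$ and the sum is componentwise, membership in $T_{i+1}^n$ is a coordinatewise condition. Concretely, an element $u\in T_1^n$ lies in $T_{i+1}^n$ exactly when $u_h=0$ for all $h\le i$. I would therefore argue one coordinate $h$ at a time.

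Fix $x\in A_1^n$ and $h\le i$. The $h$-th component of $\Delta_g(f,f')$ at $x$ is
\[
f_h\bigl(x+f'(x)+g(x)\bigr)-f_h\bigl(x+f'(x)\bigr).
\]
Because $f\in T_1^n$ is triangular, $f_h$ depends only on the coordinates $1,\dots,h-1$ of its argument. Because $g\in T_i^n$, we have $g_k=0$ for every $k<i$, so $g(x)$ vanishes in all coordinates $1,\dots,i-1\supseteq 1,\dots,h-1$. Hence the two arguments of $f_h$ agree in the coordinates on which $f_h$ depends, and the difference is $0$. This gives $\Delta_g(f,f')_h=0$ for $h\le i$.

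It remains to check that $\Delta_g(f,f')\in T_1^n$ at all, so that, combined with the vanishing of the first $i$ components, it lies in $T_{i+1}^n=\prod_{h>i}(S_h\cap T_1^n)$. Membership in $T_1^n$ follows from condition~\eqref{eq:ast-closure-Ti}: both $f\circbin(\Id+f'+g)$ and $f\circbin(\Id+f')$ lie in $T_1^n$, because $f'+g\in T_1^n$, and $T_1^n$ is an additive group.

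The one delicate point is this final identification. The difference has zero components in positions $\le i$, and I need it to lie in $T_{i+1}^n$ and not merely in $U_{i+1}^n$. This is where the hypothesis $T_j^n=\prod_h(S_h\cap T_j^n)$ is used, together with the convention that $T_{i+1}^n$ consists of the elements of $T_1^n$ supported on coordinates $>i$ (identified up to support). I would state this convention explicitly. The case $i\ge n$ is then automatic, since all components vanish and $\Delta_g(f,f')=0\in T_{i+1}^n=\{0\}$.
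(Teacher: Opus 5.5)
Your argument is correct and is essentially the paper's proof: for $k\le i$ the arguments of $f_k$ in the two terms agree on the coordinates $1,\dots,k-1$ because $g$ vanishes there, so the first $i$ components of $\Delta_g(f,f')$ are zero. Your additional checks make explicit what the paper uses implicitly: that $\Delta_g(f,f')\in T_1^n$ by the closure condition~\eqref{eq:ast-closure-Ti}, and that $T_{i+1}^n$ is to be read as the elements of $T_1^n$ supported on the coordinates $>i$.
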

	\begin{proof}
	If \(f, f'\in T_1^n\) and \(g\in T_i^n\), then for every \(1\le k\le n\) and
			every \(x\in A_1^n\) we have
			\begin{align*}
				\Delta_g(f,f')_k(x)&=\bigl(f_k\circbin(\Id+f'+g)\bigr)(x)- \bigl(f_k\circbin(\Id+f')\bigr)(x) \\
				&= f_k\bigl((x+f'(x)+g(x))_{1}^{k-1}\bigr)- f_{k}\bigl((x+f'(x))_{1}^{k-1}\bigr).
			\end{align*}
			If \(k\leq i\), then \((x+f'(x)+g(x))_{1}^{k-1}=(x+f'(x))_{1}^{k-1}\), since the
			components \(g_h\) vanish for \(h\le k-1<i\); hence \(\Delta_g(f,f')_k=0\).
			Thus \(\Delta_g(f,f')\in T_{i+1}^n\).
	\end{proof}
	
	Note that \(W_n=\Id+T_1^n\) is a group under composition; hence \((T_1^n,\ast)\)
		is a group, isomorphic to \(W_n\). Therefore, by Proposition~\ref{prop:T1ncomp}
		and Theorem~\ref{thm:Tbrace}, \((T_1^n, +, \ast)\) is a right brace.
	
	\bigskip
	
	We proceed by giving an explicit formula for the inverse with respect to \(\ast\) of an element in \(T_1^n\).
	For each $1 \le i \le n$, let us consider \(G_i \in S_i \cap T_1^n\). Observe that
	\[
	\Delta_{G_i}(G_i)= G_i\circbin (\Id+G_i) -G_i=0, 
	\]
	meaning that, in this case, the difference operator \(\Delta\) is nilpotent already at step one. Therefore we can apply
	the inverse formula in Remark~\ref{rem:inverseformula}
\[
s(G_i)= \sum_{k=0}^{\infty} (-1)^{k+1} \Delta^k_{G_i}(G_i)= -G_i,
\]
	and trivially
	\[
	G_i \ast (-G_i)= -G_i + G_i \circbin (\Id -G_i)= -G_i+ G_i =0.
	\]
	Now, \(\bigcup_{1\le i \le n}\bigl(S_i \cap T_1^n\bigr)\) is a generating set for
		\((T_1^n,\ast)\): indeed, let \(g\in T_1^n\) and, for \(1\le i\le n\), let
		\(G_i\in S_i\cap T_1^n\) be the function whose \(i\)-th component is \(g_i\).
	We have
	\begin{align*}
		\oneplus{g}(x) &= x + g(x)\\
		&= \big(x_1 + g_1(0),\, x_2+g_2(x_1),\, \dots ,\, x_n + g_n(x_1, \dots, x_{n-1})\big) \\
		&= (\Id+G_1)\circbin (\Id+G_2) \circbin \dots \circbin (\Id + G_n)(x),
	\end{align*}
	so that \(g = G_1 \ast \dots \ast G_n\).
	Hence \(h\defeq(-G_n)\ast (-G_{n-1}) \ast \dots \ast (-G_1)\) is the inverse of \(g\) with respect to \(\ast\).
	Moreover, by a direct calculation, we can see that for every \(g \in T_1^n\) the formula in~\eqref{eq:delta} becomes
	\begin{equation}\label{eq:delta-explicit}
		\Delta^i_g(g) = \sum_{j=0}^{i+1}(-1)^j \binom{i+1}{j}(\oneplus{g})^{(i+1)-j},
	\end{equation}
	where the powers of \(\oneplus{g}\) are taken with respect to \(\circbin\) and
		\((\oneplus{g})^{0}=\Id\).
	
	\subsection{Finitary Triangular Functions}
	When \(A\) is a field and \(T_i^j\) is a group of linear functions, the sets \(\Id+T_i^j\) are finitary linear groups (see, e.g.,~\cite{Hall1995}). This allows us to call \emph{finitary triangular maps} the elements \(F\in
	T_i^j\), where \(i\) and \(j\) are finite. If \(F\in T_i^j\) and \(G\in T_h^k\) are two finitary triangular
	maps, then \(F, G, F \ast G, F + G\in T_u^v\), where \(u=\min (i,h)\) and \(v=\max(j,k)\). Hence, by the
	previous subsection, the set \(\mathcal{FT}\defeq\bigcup_{-\infty<i\le j<\infty} T_i^j\) is a right brace with respect to \(+\) and \(\ast\), 
	called the brace of \emph{finitary triangular functions}. As observed in Remark~\ref{rem:wreath}, the group \((\Id+T^j_i,\circbin)\) is
	an iterated wreath product of a finite number of copies of \(A\). Suppose that \(A\) is finite. Then the groups
		\((T^j_i,\ast)\) are soluble, and hence \(\mathcal{FT}\) is locally soluble. Moreover, \(\mathcal{FT}\) is locally nilpotent if and
		only if \(A\) is a \(p\)-group, by~\cite{Baumslag1959}.

		\subsection{Positive Triangular Functions}
		This subsection is devoted to the case \(A_0^{\infty}= \prod_{i=0}^{\infty}A\). Using our notation, in this subsection
		\(T_i\) is the additive group of all functions \(f\colon A_0^{\infty} \to A_0^{\infty}\) such that
		\begin{itemize}
			\item for \(h\ge i\) the component \(f_h\) depends only on the variables \(x_k\) with \(0 \leq k\leq h-1\);
			\item \(f_h=0\) for all \(0\leq h < i \).
		\end{itemize}
		We recall that in our notation \(x_i^j=(x_i, \dots, x_j)\in A_i^j\) denotes the truncation of the vector \(x\)
		to the interval of coordinates \([i,j]\). We consider the following filtration, satisfying~\eqref{eq:filtration},
		\[
		T_0 \supseteq T_1\supseteq \dots \supseteq T_i \supseteq T_{i+1} \supseteq\cdots
		\]	
		If \(f \in T_i\) and \(g \in T_0\), then
		\begin{align*}
				\bigl(f\circbin (\Id+g)\bigr)(x)&= f\bigl(x_0 + g_0(0),\, x_1 +g_1(x_0),\, x_2+g_2(x_0,x_1),\, \dots \bigr)\\
				&=\bigl(0, \dots,0,\, f_i\bigl((x+g(x))_0^{i-1}\bigr),\, f_{i+1}\bigl((x+g(x))_0^{i}\bigr),\, \dots\bigr),
			\end{align*}
			whose components of index \(<i\) vanish; thus \(f\circbin (\Id+g)\in T_i\), i.e.,
			Condition~\eqref{eq:ast-closure-Ti} is satisfied, while Condition~\eqref{eq:normality}
			is automatic since \((T_0,+)\) is abelian.
		
		As in the general framework described in Section~\ref{sec:general}, the filtration \(\{T_i\}_{i\ge 0}\) induces a
		topology on \(T_0\) where the subgroups \(T_i\) form a fundamental system of neighborhoods of \(0\). In the same way we can
		define the distance \(d\), which turns $T_0$ into a complete ultrametric space.
		
		\begin{proposition}
			The near-ring \(T_0\) is compatible.
		\end{proposition}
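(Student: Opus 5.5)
The plan is to mirror the proof of Proposition~\ref{prop:T1ncomp}, working componentwise. Here the filtration is indexed from \(0\), so compatibility means: for \(f,f'\in T_0\) and \(g\in T_i\), the element \(\Delta_g(f,f')\) lies in \(T_{i+1}\).

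Fix such \(f,f',g\) and compute the \(k\)-th component of \(\Delta_g(f,f')\) at a point \(x\in A_0^\infty\). By definition of composition, and since \(f_k\) depends only on the coordinates \(0,\dots,k-1\),
\[
\Delta_g(f,f')_k(x)=f_k\bigl((x+f'(x)+g(x))_0^{k-1}\bigr)-f_k\bigl((x+f'(x))_0^{k-1}\bigr).
\]
Here the operations are pointwise in the abelian group \(A_0^\infty\), and the right distributive law guarantees that taking components commutes with the near-ring operations.

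The key step is the case \(k\le i\). Since \(g\in T_i\), we have \(g_h=0\) for all \(h<i\), and in particular for all \(h\le k-1\). Hence the truncations \((x+f'(x)+g(x))_0^{k-1}\) and \((x+f'(x))_0^{k-1}\) coincide, and \(\Delta_g(f,f')_k=0\) for \(0\le k\le i\). For \(k=0\) this is immediate even without the argument, because \(f_0\) is a constant and the difference is \(f_0-f_0=0\).

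It remains to check that \(\Delta_g(f,f')\) is a triangular function in \(T_0\), so that its components of index \(\ge i+1\) have the required dependence. The \(k\)-th component of \(f\circbin(\Id+h)\) with \(h\in T_0\) depends only on \(x_0,\dots,x_{k-1}\), because \((\Id+h)\) is triangular with \(h\) strictly lower triangular. Closure of \(T_0\) under the maps \(f\mapsto f\circbin(\Id+h)\) was already verified above, and \(T_0\) is an additive group. Applying this with \(h=f'+g\) and \(h=f'\), the difference lies in \(T_0\).

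Combining the two steps, \(\Delta_g(f,f')\) lies in \(T_0\) and vanishes in every coordinate \(\le i\), so \(\Delta_g(f,f')\in T_{i+1}\). The only real obstacle is bookkeeping: the indexing starts at \(0\) rather than \(1\), and we must confirm that the vanishing range \(k\le i\), and not merely \(k<i\), really holds. It does, because \(f_k\) only sees coordinates strictly below \(k\).
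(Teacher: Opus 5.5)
Your proof is correct and follows the same route as the paper: since $g$ vanishes in every coordinate below $i$, the arguments $x+f'(x)+g(x)$ and $x+f'(x)$ agree on coordinates $0,\dots,i-1$, so every component of $\Delta_g(f,f')$ of index $\le i$ vanishes. Your extra check that $\Delta_g(f,f')$ lies in $T_0$ is a harmless addition that the paper leaves implicit via the closure $T_i\circbin(\Id+T_0)\subseteq T_i$. One small inaccuracy: componentwise evaluation of a composite, $(f\circbin h)_k=f_k\circbin h$, holds by the definition of composition of vector functions, not by the right distributive law.
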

		\begin{proof}
		Let \(f,f'\in T_0\) and \(g\in T_i\). Since \(g_h=0\) for \(h<i\), the vectors
				\(x+f'(x)+g(x)\) and \(x+f'(x)\) agree in the coordinates \(0,\dots,i-1\); hence
				\begin{align*}
					\Delta_g(f,f')(x) & =f\bigl(x+f'(x)+g(x)\bigr)- f\bigl(x+f'(x)\bigr)\\
					&= \Bigl(0, \dots, 0,\, f_{i+1}\bigl((x+f'(x)+g(x))_0^{i}\bigr) -  f_{i+1}\bigl((x+f'(x))_0^{i}\bigr),\, \dots\Bigr),
				\end{align*}
				where all the components of index \(\le i\) vanish. Thus \(\Delta_g(f,f')\in T_{i+1}\).
		\end{proof}
		By Theorem~\ref{thm:Tbrace} we have the following result.
		\begin{proposition}
			The triple \((T_0,+,\ast)\) is a topological right brace.
		\end{proposition}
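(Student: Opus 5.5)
The plan is to apply Theorem~\ref{thm:Tbrace} to the right near-ring $R$ of all maps $A_0^\infty\to A_0^\infty$, with pointwise addition and composition, taking $T\defeq T_0$ and the filtration $\{T_i\}_{i\ge0}$. Reindexing so that the first term is $T_0$ instead of $T_1$ is harmless, since nothing in Section~\ref{sec:general} depends on the starting index. This means checking the standing hypotheses of Section~\ref{sec:general} in turn and then invoking compatibility.

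First, the hypotheses on the filtration. Each $T_i$ is an additive subgroup of $T_0$, and the chain $T_0\supseteq T_1\supseteq\cdots$ is decreasing. The intersection $\bigcap_i T_i$ is $\{0\}$, because an element of every $T_i$ has all components $f_h$ equal to $0$. Condition~\eqref{eq:normality} holds automatically since $(T_0,+)$ is abelian. Condition~\eqref{eq:ast-closure-Ti} was verified above: for $f\in T_i$ and $g\in T_0$ we have $f\circbin(\Id+g)\in T_i$. Compatibility is exactly the preceding proposition.

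The one hypothesis needing real care is completeness of $T_0$ with respect to $d$, which was asserted but deserves an argument, and I would prove it directly. Let $\{f^{(j)}\}$ be a Cauchy sequence. For each $h$ there is an index $N_h$ such that $f^{(j)}-f^{(k)}\in T_{h+1}$ for all $j,k\ge N_h$. Membership in $T_{h+1}$ means the components of index at most $h$ agree, so for $j\ge N_h$ the component $f^{(j)}_h$ is a fixed function of $x_0^{h-1}$. Define $f$ componentwise by $f_h\defeq f^{(N_h)}_h$. Then $f$ is triangular, so $f\in T_0$. Moreover, for $j\ge\max(N_0,\dots,N_h)$ the difference $f^{(j)}-f$ has vanishing components up to index $h$, i.e.\ it lies in $T_{h+1}$. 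Hence $f^{(j)}\to f$.

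With all hypotheses in place, Theorem~\ref{thm:Tbrace} yields that $(T_0,+,\ast)$ is a topological right skew brace. Since $(T_0,+)$ is abelian, it is a right brace. The only potential obstacle is completeness, and the componentwise stabilization argument above handles it.
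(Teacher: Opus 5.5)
Your proposal is correct and follows the paper's route. You verify the standing hypotheses of Section~\ref{sec:general} for the filtration $\{T_i\}_{i\ge0}$ (closure, trivial intersection, automatic normality, and compatibility from the preceding proposition) and then invoke Theorem~\ref{thm:Tbrace}; the only addition is your componentwise-stabilization proof that $T_0$ is complete, which the paper merely asserts, and that argument is sound.
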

		With \(S_i\) defined as in~\eqref{eq:Si}, we can summarize the following properties of \(T_0\).
		\begin{proposition}
		For every \(i\ge 0\) and \(j\ge 1\) the following hold:
			\begin{enumerate}
				\item \((T_{i}, \ast) \trianglelefteq (T_0, \ast)\),
				\item \(T_i=T_{i+1} \rtimes (S_i\cap T_0)\),
				\item \(T_0^j= (S_{j} \cap T_0)\rtimes \dots \rtimes (S_0\cap T_0)\),
				\item \(T_0= T_{j} \rtimes \bigl((S_{j-1} \cap T_0) \rtimes \dots \rtimes (S_0 \cap T_0)\bigr)   = T_j \rtimes  T_{0}^{j-1} \).
			\end{enumerate}
		\end{proposition}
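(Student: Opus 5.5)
Part (1) follows directly from Proposition~\ref{prop:cosets}(1): $T_0$ is compatible, so every member of the filtration is normal in $(T_0,\ast)$. The other three parts are semidirect-product decompositions inside $(T_0,\ast)$. For a decomposition $N\rtimes H$ I must check three things: $N$ is normal, $N\cap H=\{0\}$, and $N\ast H$ is everything. The basic tool is the factorisation already used for finite triangular functions, $g=G_1\ast\dots\ast G_n$. I now read it in $T_0$: for $g\in T_i$, take $G_i\in S_i\cap T_0$ to be the function whose $i$-th component is $g_i$ and whose other components are $0$. Also write $\mB$-type elements as $\Id+G_i$.

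For (2), the normal factor is $T_{i+1}$, normal in $(T_i,\ast)$ by part (1). The intersection $T_{i+1}\cap S_i$ is $\{0\}$, since an element of $S_i$ is nonzero only in coordinate $i$, while elements of $T_{i+1}$ vanish there. Next, $S_i\cap T_0$ is a $\ast$-subgroup. The reason is that $G\ast G'=G'+G\circbin(\Id+G')$, and $G_i$ depends only on $x_0,\dots,x_{i-1}$, which $\Id+G'$ leaves unchanged. So $G\ast G'=G+G'$, and the inverse of $G$ is $-G$, as computed for $\Delta_{G_i}(G_i)=0$. For the product, given $g\in T_i$ I set $h\defeq g\ast(-G_i)$, or rather I choose the order that gives $T_{i+1}\ast(S_i\cap T_0)$. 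I would compute $(\Id+h')\circbin(\Id+G_i)$ for the candidate $h'=g-G_i$ shifted appropriately. The concrete choice is $h=g\ast(-G_i)$, using $\Id+h=(\Id+g)\circbin(\Id-G_i)$. Coordinate $i$ of $(\Id+g)(\Id-G_i)(x)$ is $x_i-G_i(x)+g_i(x_0^{i-1})=x_i$, and coordinates below $i$ are untouched. Hence $h\in T_{i+1}$ and $g=h\ast G_i$.

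For (3) and (4) I iterate (2). Applying (2) at levels $0,1,\dots,j-1$ gives $T_0=T_1\rtimes(S_0\cap T_0)$, then $T_1=T_2\rtimes(S_1\cap T_0)$, and so on. This yields $T_0=T_j\ast(S_{j-1}\cap T_0)\ast\dots\ast(S_0\cap T_0)$, with each partial product $T_k\ast(S_{k-1}\cap T_0)\ast\cdots$ a semidirect product whose normal part is $T_k$. I still need to identify $(S_{j-1}\cap T_0)\ast\dots\ast(S_0\cap T_0)$ with $T_0^{j-1}$, which are the elements of $T_0$ supported on coordinates $<j$.

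I expect this identification to be the main obstacle. Two facts are needed.

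First, $T_0^{j-1}$ is a $\ast$-subgroup. This holds because composition with $\Id+$(something) cannot create components of index $\ge j$. Concretely, $f\ast g=g+f\circbin(\Id+g)$ has $h$-component $g_h+f_h(\cdot)=0$ for $h\ge j$. It is closed under inverses because the inverse construction in Proposition~\ref{prop:Tastgroup} stays inside, or directly because it is $T_0^{j-1}\cong$ the finite group $\Id+T_0^{j-1}$ acting on the first $j$ coordinates.

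Second, $T_0^{j-1}\cap T_j=\{0\}$ and $T_0^{j-1}\ast T_j=T_0$. The factorisation $g=G_0\ast G_1\ast\dots$ restricted to finitely many coordinates expresses any element of $T_0^{j-1}$ as a $\ast$-product of elements of $S_k\cap T_0$, $k<j$. By (2) the factors can be reordered into the nested semidirect form, which gives (3) (with $j$ in place of $j-1$) and the identification in (4).

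The part needing care is the orientation: the paper writes the normal subgroup on the left, so the displayed orderings must match which side is normal. The nested structure $(S_j\cap T_0)\rtimes\dots\rtimes(S_0\cap T_0)$ has to be checked inductively. The normality of $S_j\cap T_0$ in $T_0^{j}$ comes from $T_0^j\cap T_j=S_j\cap T_0$ together with part (1) applied to the finite brace $T_0^j$, which is compatible by the same argument as Proposition~\ref{prop:T1ncomp}.
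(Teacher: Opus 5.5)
Your proposal is correct and follows the argument the paper leaves implicit (the paper states these properties without proof): (1) is Proposition~\ref{prop:cosets}(1) applied to the compatible near-ring $T_0$, and (2)--(4) come from peeling off one coordinate at a time via $g=\bigl(g\ast(-G_i)\bigr)\ast G_i$ with $g\ast(-G_i)\in T_{i+1}$, which is the same factorisation the paper uses for finite triangular functions. Two small remarks: normality of $S_j\cap T_0=T_j\cap T_0^j$ in $(T_0^j,\ast)$ follows at once from $T_j\trianglelefteq(T_0,\ast)$, so no separate compatibility check on $T_0^j$ is needed; and $\Id+T_0^{j-1}$ is an iterated wreath product of $j$ copies of $A$ (cf.\ Remark~\ref{rem:wreath}), which is not a finite group unless $A$ is finite.
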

		
		\begin{remark} Note that
			\[
			T_0^0 \subseteq T_0^1 \subseteq \dots \subseteq T_0^{i} \subseteq T_0^{i+1} \subseteq \dots
			\]
			and that \(T_0\) is the closure of the subgroup \(\bigcup_{j \geq 0} T_0^j \) of finitary functions.
			Moreover, by the previous proposition, \(T_0\) is the inverse limit
			\[
				T_0=\varprojlim T_0/T_k =\varprojlim  T_0^{k}.
				\]
		\end{remark}
		
		\subsection{Negative Triangular Functions}
		In this subsection we consider, for \(i\le 0\), the additive groups \(T^i\) of all functions \(f\colon A_{-\infty}^{0} \to A_{-\infty}^{0}\) such that
		\begin{itemize}
			\item for \(h\le 0\) the component \(f_h\) depends only on the variables \(x_k\) with \(k\leq h-1\);
			\item \(f_h=0\) for all \(h>i \).
		\end{itemize}
		This case is slightly different from the previous ones. Indeed \(T^0\) is not a group with respect to \(\ast\). The
		function \(f\) defined by \(f_i(x)=x_{i-1}\) provides an example for which \(\Id+f\) is not invertible.
		
		This is easily seen by fixing \(0\ne h\in A\) and setting \(b_i\defeq a_i+(-1)^ih\), where \(a\in A_{-\infty}^0\). We have
		\begin{align*}
			\bigl((\Id+f)(b)\bigr)_i= b_i+f_i(b) &=b_i+b_{i-1}
			=a_i+(-1)^ih+a_{i-1}+(-1)^{i-1}h\\ & =a_i+a_{i-1}= a_i+f_i(a) =\bigl((\Id+f)(a)\bigr)_i,
		\end{align*}
		i.e., \((\Id+f)(b)=(\Id+f)(a)\), with \(a\ne b\) since \(h\ne 0\), and thus \(\Id+f\) is not injective. In particular \(f\)
		has no inverse with respect to \(\ast\).
		
		Since \(f\) is linear, we have that there is no hope of defining a brace structure even in the case of upper triangular
		matrices of the form \((1+a_{i,j})_{-\infty < i< j \le 0}\). Nevertheless \(T^0\) is a monoid with respect to
		\(\ast\).
		
		We consider the following filtration
		\begin{equation}\label{eq:filt}
			T^{0} \supseteq T^{-1} \supseteq\dots \supseteq T^{j} \supseteq  T^{j-1}\supseteq \cdots
		\end{equation}
		where trivially \(\bigcap_{ j \leq 0 }T^{j}=\{0\}\).
		The filtration \eqref{eq:filt} turns \((T^0, +)\) into a topological group. We point out that \(T^i\) and \(S_i\cap
		T^0\) are closed with respect to the operation \(\ast\) and that we have the semidirect product decomposition of monoids
		\[
		T^j= T^{j-1} \ltimes_{\ast} (S_j\cap T^0).
		\]
		Indeed \(T^{j-1} \cap (S_j\cap T^0)=\Set{0}\) and, if \(s\in S_j\cap T^0\) and
		\(f\in T^{j-1}\), setting \(s'(x)=s(x+f(x))\) we obtain
		\begin{align*}
			(s \ast f) (x) &= \bigl((\Id+s)\circbin(\Id+f)-\Id\bigr)(x)\\
			&= \bigl(f+s\circbin (\Id+f)\bigr)(x) = f(x)+s(x+f(x)) = (f\ast s')(x).
		\end{align*}
		In particular, if \(f,g\in T^{j-1}\) and \(s,t\in S_j\cap T^0\), then \((g\ast s)\ast (f\ast t) = (g\ast f)\ast (s'\ast t)\),
		where \(g\ast f\in T^{j-1}\) and \(s' \ast t \in S_j\cap T^0\).

		So, with respect to the \(\ast\)-operation, for every \(k \ge 0\) it holds
			\begin{align*}
				T^j &= T^{j-1} \ltimes (S_j \cap T^0) \\
				&= T^{j-k} \ltimes\bigl((S_{j-k+1} \cap T^0) \ltimes \dots \ltimes (S_j\cap T^0)\bigr).
			\end{align*}
			In particular, taking \(j=0\), for every \(i\le 0\) we have
			\[	T^0 = T^{i} \ltimes\bigl((S_{i+1} \cap T^0) \ltimes \dots \ltimes (S_0\cap T^0)\bigr).\]
			Setting \(\Sigma_i\defeq (S_{i+1} \cap T^0) \ltimes \dots \ltimes (S_0\cap T^0)\), we have \(\Sigma_i\cap T^i=\Set{0}\); hence
			each \(T^i\) is a complement of the normal sub-semigroup \(\Sigma_i\) in \(T^0=T^i\ltimes \Sigma_i\).
		
		\section{Skew braces from iterated wreath products}\label{sec:wreath}
		Let $n\ge1$ and let $G_1,\dots,G_n$ be (not necessarily abelian) groups. Set
		$G\defeq G_1\times\dots\times G_n$ and let $R\defeq M(G)$ be the right
		near-ring of all maps $G\to G$, where $+$ is the pointwise multiplication of
		$G$ and $\circbin$ is the composition of maps; the identity element of
		$(R,\circbin)$ is the identity map $\Id$.
		
		For $1\le i\le n$ set $B_i\defeq G_i^{G_1\times\dots\times G_{i-1}}$, where
		for $i=1$ the empty product is a singleton, so that $B_1\cong G_1$. We define
		$T$ as the set of maps whose $i$-th component only depends on the variables
		$x_1,\dots,x_{i-1}$, namely
		\[
		T\defeq\bigl\{f\in R \mid
		f(x_1,\dots,x_n)=\bigl(f_1,\,f_2(x_1),\dots,f_n(x_1,\dots,x_{n-1})\bigr),
		\ f_i\in B_i\bigr\}.
		\]
		Since $+$ is computed componentwise and pointwise, $(T,+)$ is a subgroup of
		$(R,+)$, isomorphic to the direct product $\prod_{i=1}^{n}B_i$. For
		$1\le j\le n+1$ we set
		\[
		T_j\defeq\{f\in T \mid f_i=1 \text{ for all } i<j\}
		\cong\prod_{i=j}^{n}B_i,
		\]
		and $T_j\defeq\{0\}$ for $j>n$. This is a finite decreasing chain of direct
		factors of $(T,+)$ with trivial intersection, so it is a filtration
		satisfying conditions~\eqref{eq:filtration} and~\eqref{eq:normality}.
		\begin{proposition}
			$T$ is compatible with respect to the filtration \(\{T_i\}_{1\le i\le n+1}\).
		\end{proposition}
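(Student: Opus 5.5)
We must check condition \eqref{eq:ast-closure-Ti} (assuming it is needed, since only \eqref{eq:filtration} and \eqref{eq:normality} were verified) and the compatibility condition $\Delta_g(f,f')\in T_{j+1}$ for $f,f'\in T$, $g\in T_j$. Both follow from the triangular shape of the maps, as in Proposition~\ref{prop:T1ncomp}. Throughout we write the group operation of $G$ additively, as in the paper, so the identity of each $G_i$ is the neutral element ($1$ in multiplicative notation, $0$ in $T$).

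\textbf{Closure condition.} For $f\in T_j$ and $h\in T$, the map $f\circbin(\Id+h)$ has $i$-th component
\[
\bigl(f\circbin(\Id+h)\bigr)_i(x)=f_i\bigl((x+h(x))_1,\dots,(x+h(x))_{i-1}\bigr).
\]
The coordinate $(x+h(x))_k=x_k+h_k(x_1,\dots,x_{k-1})$ depends only on $x_1,\dots,x_k$. Hence the $i$-th component depends only on $x_1,\dots,x_{i-1}$, so $f\circbin(\Id+h)\in T$. Moreover, it is trivial whenever $f_i$ is trivial, i.e.\ for $i<j$. This gives \eqref{eq:ast-closure-Ti}.

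\textbf{Compatibility.} Fix $f,f'\in T$ and $g\in T_j$. Compute $\Delta_g(f,f')=f\circbin(\Id+f'+g)-f\circbin(\Id+f')$ componentwise: since $+$ is pointwise and componentwise, the $k$-th component at $x$ is
\[
f_k\bigl((x+f'(x)+g(x))_1^{k-1}\bigr)-f_k\bigl((x+f'(x))_1^{k-1}\bigr),
\]
with the difference taken in the possibly nonabelian group $G_k$. The key observation is that $g_h$ is trivial for $h<j$, so for $k\le j$ the two arguments $(x+f'(x)+g(x))_1^{k-1}$ and $(x+f'(x))_1^{k-1}$ coincide coordinatewise. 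Non-commutativity does not matter here, because in each coordinate $h\le k-1<j$ we add the identity of $G_h$. The two values are therefore equal, and their difference is trivial for every $k\le j$. By the first step both terms lie in $T$, so $\Delta_g(f,f')\in T$ with trivial components of index $\le j$; that is, $\Delta_g(f,f')\in T_{j+1}$. For $j\ge n+1$ we have $g=0$, so $\Delta_g(f,f')=0$ trivially.

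The only real care needed is the bookkeeping of the nonabelian operation and of the index conventions ($T_{n+1}=\{0\}$ being consistent with the definition). There is no genuine obstacle: the argument is the verbatim analogue of Proposition~\ref{prop:T1ncomp}.
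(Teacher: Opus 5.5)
Your proposal is correct and follows the paper's proof essentially step for step. You first verify the closure condition $T_j\circbin(\Id+T)\subseteq T_j$ from the triangular shape of $\Id+h$, and then observe that for $g\in T_j$ the arguments fed to $f_k$ in $f\circbin(\Id+f'+g)$ and in $f\circbin(\Id+f')$ coincide for $k\le j$, so $\Delta_g(f,f')\in T_{j+1}$. Your explicit check that $\Delta_g(f,f')$ lies in $T$ at all (via $f'+g\in T$ and the closure step) is a point the paper leaves implicit.
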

		
		\begin{proof}
			First observe that for every $g\in T$ the $m$-th component of $(\Id+g)(x)$
			is
			\[
			x_m\,g_m(x_1,\dots,x_{m-1}),
			\]
			which only depends on $x_1,\dots,x_m$.
			
			Let $f\in T_j$ and $g\in T$. The $i$-th component of $f\circbin(\Id+g)$
			is $f_i$ evaluated at the first $i-1$ components of $(\Id+g)(x)$: it is
			trivial for $i<j$ and it only depends on $x_1,\dots,x_{i-1}$. Hence
			$f\circbin(\Id+g)\in T_j$, and so Condition~\eqref{eq:ast-closure-Ti} holds.
			
			Let $f,f'\in T$ and $g\in T_j$. It remains to prove that $\Delta_g(f,f')\in T_{j+1}$, i.e.,
			$\Delta_g(f,f')_i=1$ for every $i\le j$. Write
			\[
			u=(\Id+f'+g)(x),
			\qquad
			v=(\Id+f')(x),
			\]
			so that the $i$-th component of $\Delta_g(f,f')$ is
			\[
			\Delta_g(f,f')_i(x)
			=f_i(u_1,\dots,u_{i-1})\,f_i(v_1,\dots,v_{i-1})^{-1}.
			\]
			Since \(g_m=1\) for every \(m<j\), we have that \(u,v\) agree on the first \(j-1\) components.
			Now fix $i\le j$. The map $f_i$ only reads the components
			$u_1,\dots,u_{i-1}$ of its argument, and $i-1\le j-1$, so
			$f_i(u_1,\dots,u_{i-1})=f_i(v_1,\dots,v_{i-1})$ and
			$\Delta_g(f,f')_i=1$, as claimed. Therefore
			$\Delta_g(f,f')\in T_{j+1}$.
		\end{proof}
		Since the filtration is finite, the induced topology is discrete and $T$ is
		complete. Theorem~\ref{thm:Tbrace} then yields the following.
		
		\begin{corollary}
			$(T,+,\ast)$ is a topological right skew brace with the discrete
			topology. Its additive group is the direct product
			$\prod_{i=1}^{n}B_i$, while the map $f\mapsto\Id+f$ is an isomorphism
			from $(T,\ast)$ onto the group of permutations of $G$ of the form
			\[
			(x_1,\dots,x_n)\longmapsto
			\bigl(x_1f_1,\;x_2f_2(x_1),\dots,\;x_nf_n(x_1,\dots,x_{n-1})\bigr),
			\]
			which is the iterated wreath product $G_n\wr\dots\wr G_1$ with
			respect to the regular actions.
		\end{corollary}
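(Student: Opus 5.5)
The corollary has three parts: the brace structure, the description of the additive group, and the identification of the multiplicative group. I will prove them in that order.

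\textbf{Brace structure and additive group.} The preceding proposition shows that $T$ is compatible with respect to the finite filtration $\{T_j\}$. Since $T_{n+1}=\{0\}$, the distance $d$ takes only finitely many nonzero values, each at least $\exp(-n)$. Hence the topology is discrete and every Cauchy sequence is eventually constant, so $T$ is complete. All the hypotheses of Section~\ref{sec:general} are therefore in place:
\begin{itemize}
\item \eqref{eq:filtration} and \eqref{eq:normality} hold because the $T_j$ are direct factors of $(T,+)$;
\item \eqref{eq:ast-closure-Ti} was verified in the preceding proof.
\end{itemize}
Theorem~\ref{thm:Tbrace} then gives that $(T,+,\ast)$ is a topological right skew brace, with the discrete topology. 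The additive group $(T,+)$ is componentwise and pointwise multiplication, which is $\prod_{i=1}^n B_i$ as already observed.

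\textbf{The map $f\mapsto\Id+f$.} By \eqref{eq:ast-via-Id}, $f\mapsto\Id+f$ is a bijection $T\to\Id+T$ that turns $\ast$ into $\circbin$. Since $(T,\ast)$ is a group by Proposition~\ref{prop:Tastgroup}, $\Id+T$ is a group under composition and the map is an isomorphism. By the first line of the previous proof, the elements of $\Id+T$ are exactly the maps
\[
(x_1,\dots,x_n)\mapsto\bigl(x_1f_1,\,x_2f_2(x_1),\dots,x_nf_n(x_1,\dots,x_{n-1})\bigr).
\]
Since $\Id+T$ is a group, every such map is a bijection, and so these are permutations of $G$.

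\textbf{Identification with the iterated wreath product.} This is the only step needing more than bookkeeping. I would argue by induction on $n$, using the standard description of the permutational wreath product with regular actions.
\begin{itemize}
\item Let $H_{n-1}$ be the group of such maps on $G_1\times\dots\times G_{n-1}$; by induction it is $G_{n-1}\wr\dots\wr G_1$ acting on that product.
\item The maps with $f_1=\dots=f_{n-1}=1$ form a normal subgroup $N\cong G_n^{G_1\times\dots\times G_{n-1}}=B_n$, acting on the last coordinate by right multiplication depending on the first $n-1$ coordinates. Normality follows from a direct conjugation computation.
\item The maps with $f_n=1$ form a complement isomorphic to $H_{n-1}$. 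It acts on $N$ by permuting the index set $G_1\times\dots\times G_{n-1}$, via the action of $H_{n-1}$ on that set.
\item Every element factors uniquely as (element of the complement)$\circbin$(element of $N$). Hence the group is $N\rtimes H_{n-1}=G_n\wr H_{n-1}$ with respect to the action of $H_{n-1}$ on $G_1\times\dots\times G_{n-1}$.
\end{itemize}
This completes the induction. I expect the main care to be needed in checking that the conjugation action matches the permutational action, including the left/right conventions. This is the same decomposition $\mB_1\ltimes\dots\ltimes\mB_n$ as in Remark~\ref{rem:wreath}.
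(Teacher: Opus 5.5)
Your proposal is correct and follows the paper's route. The finite filtration makes the topology discrete and $T$ complete, so Theorem~\ref{thm:Tbrace} applies, and \eqref{eq:ast-via-Id} gives the isomorphism $f\mapsto\Id+f$ onto the maps of the stated form. The paper asserts the identification with $G_n\wr\dots\wr G_1$ without argument; your induction (normal subgroup $N$ as the kernel of restriction to the first $n-1$ coordinates, complement $\cong H_{n-1}$ permuting $G_1\times\dots\times G_{n-1}$) is a valid way to supply it. The one convention to watch: the natural map from $B_n$ to $N$ is an anti-homomorphism when $G_n$ is non-abelian, so compose it with pointwise inversion to get the isomorphism $N\cong B_n$.
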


		\section{IA-endomorphisms of free nilpotent groups}\label{sec:ia-endo}
		Let \(n\ge 2\) and let \(\Phi\) be the free group on \(x_1,\dots,x_n\). We denote by \(\gamma_i(\Phi)\) the \(i\)-th
		term of the lower central series of \(\Phi\) and we set
		\[
		F=F_{n,c}=\Phi/\gamma_{c+1}(\Phi)
		\]
		the free nilpotent group of class \(c\) on \(n\) generators. We still write \(x_1,\dots,x_n\) for the images of the
		generators.
		\begin{remark}
			We recall that, since \(F\) is free in the variety of nilpotent groups of class at most \(c\), every map \(\theta_0\colon \{x_1,\dots,x_n\}\to F\)
			uniquely extends to an endomorphism \(\theta\) of \(F\) such that \(\theta(x_i)=\theta_0(x_i)\) for each
			\(i=1,\dots,n\).
		\end{remark}
		\begin{definition}
			For \(\varphi,\psi\in \End(F)\) we define \(\varphi+\psi\) as the unique endomorphism of \(F\) such that
			\[
			(\varphi+\psi)(x_i)=\varphi(x_i)\psi(x_i)
			\]
			for all \(i=1,\dots,n\).
		\end{definition}
		Set \(R\defeq\End(F)\). It is straightforward to check that \((R,+)\) is a group isomorphic to \(F^n\). We stress that the identity
			\((\varphi+\psi)(w)=\varphi(w)\psi(w)\) holds when \(w=x_i\) is a generator, but it fails for a general word \(w\in F\).
		\begin{lemma}
			The triple \((R,+,\circbin)\) is a left near-ring with identity \(\Id\).
		\end{lemma}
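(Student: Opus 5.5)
We have to check three things: $(R,+)$ is a group, $(R,\circbin)$ is a monoid with identity $\Id$, and composition distributes on the left over $+$. The whole argument rests on the universal property recalled in the Remark: an endomorphism of $F$ is determined by the images of $x_1,\dots,x_n$, and any choice of images in $F$ extends uniquely. So the map $\varphi\mapsto(\varphi(x_1),\dots,\varphi(x_n))$ is a bijection $R\to F^n$.

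For the group structure, I would note that, by definition, this bijection carries $+$ to componentwise multiplication in $F^n$. Hence $(R,+)$ is a group isomorphic to $F^n$. Its identity $0$ is the trivial endomorphism $x_i\mapsto 1$, and $-\varphi$ is the endomorphism with $x_i\mapsto\varphi(x_i)^{-1}$. The monoid structure of $(R,\circbin)$ is just composition of endomorphisms, with identity $\Id$, so nothing needs proving there.

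The substantive point is the left distributive law $\varphi\circbin(\psi+\chi)=\varphi\circbin\psi+\varphi\circbin\chi$. Both sides are endomorphisms of $F$, so by uniqueness of extension it suffices to compare them on the generators. I would compute
\[
\bigl(\varphi\circbin(\psi+\chi)\bigr)(x_i)=\varphi\bigl(\psi(x_i)\chi(x_i)\bigr)=\varphi(\psi(x_i))\,\varphi(\chi(x_i))=\bigl(\varphi\circbin\psi+\varphi\circbin\chi\bigr)(x_i).
\]
The middle equality uses that $\varphi$ is a homomorphism, and the last equality is the definition of $+$ on generators. This finishes the argument.

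The one thing to watch is the order of composition: the identity $(\varphi+\psi)(w)=\varphi(w)\psi(w)$ is used only at $w=x_i$, never at a general word. That is also why right distributivity generally fails here: $(\psi+\chi)\circbin\varphi$ evaluated at $x_i$ is $(\psi+\chi)(\varphi(x_i))$, and $\varphi(x_i)$ is an arbitrary word. The near-ring is therefore genuinely left and not right, consistent with the use of Remark~\ref{rem:left_near_ring} later on.
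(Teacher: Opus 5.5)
Your proof is correct and is the routine verification the paper leaves implicit. The paper gives no written proof: it uses the universal property to identify $(R,+)$ with $F^n$, and notes that $(\varphi+\psi)(w)=\varphi(w)\psi(w)$ holds only on generators, which is why checking left distributivity on the $x_i$ (using that $\varphi$ is a homomorphism) suffices, as you did.
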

		From now on we write \(\gamma_s\defeq \gamma_s(F)\); note that \(\gamma_{c+1}=1\). We consider
		\[
		T=\{\varphi\in R\mid \varphi(x_j)\in \gamma_2 \text{ for all }j\}
		\]
		and the filtration \(\{T_i\}_{i\ge 1}\) defined as
		\[
		T_i=\{\varphi\in R\mid \varphi(x_j)\in \gamma_{i+1}\text{ for all }j\}.
		\]
		In this notation \(T_1=T\) and \(T_c=\{0\}\), since \(\gamma_{c+1}=1\); hence the filtration is finite, the induced topology
		is discrete and the completeness of \(T\) is automatic.
		\begin{definition}\cite{Bachmuth1965} 
			An endomorphism \(\varphi\) of a group \(G\) is said to be an \(IA\)-endomorphism if it induces the identity on the
			abelianization \(G/\gamma_2(G)\).
		\end{definition}
		It is worth noting that, in our setting, the set of \(IA\)-endomorphisms of
			\(F\) is precisely \(\Id+T\).
		
		\begin{remark}
			Note that \((T,+)\cong (\gamma_2)^n\) and, in general, \((T_i,+)\cong (\gamma_{i+1})^n\). Moreover
				\(T_i\trianglelefteq (T,+)\): indeed, if \(\eta\in T\) and \(\varphi\in T_i\), then
				\[
				(-\eta+\varphi+\eta)(x_j)=\eta(x_j)^{-1}\varphi(x_j)\eta(x_j)\in\gamma_{i+1},
				\]
				since \(\gamma_{i+1}\) is normal in \(F\). Finally, \((\Id+T)\circbin T_i\subseteq T_i\), since
				\(\gamma_{i+1}\) is fully invariant in \(F\); this is the mirrored closure condition of
				Remark~\ref{rem:left_near_ring}.
		\end{remark}
		
		\begin{theorem}[\cite{Andreadakis1965}]\label{thm:andreadakis}
			For all \(i,j\ge 1\) one has \([\gamma_j,\Id+T_i]\subseteq\gamma_{i+j}\).
		\end{theorem}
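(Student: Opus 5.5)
We prove the statement in the form: for every $\varphi\in \Id+T_i$ and every $w\in\gamma_j$ we have $w^{-1}\varphi(w)\in\gamma_{i+j}$. Here $[\gamma_j,\Id+T_i]$ denotes the subgroup generated by the elements $w^{-1}\varphi(w)$. We fix $i$ and $\varphi\in\Id+T_i$, so that $\varphi(x_k)=x_kb_k$ with $b_k\in\gamma_{i+1}$, and argue by induction on $j$.

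The basic tool is this observation. For any normal subgroup $N$ of $F$, the set
\[
H_N\defeq\{w\in F\mid w^{-1}\varphi(w)\in N\}
\]
is a subgroup of $F$. Indeed, $\varphi$ is an endomorphism, so
\[
(uv)^{-1}\varphi(uv)=v^{-1}\bigl(u^{-1}\varphi(u)\bigr)v\cdot v^{-1}\varphi(v)\in N
\]
whenever $u,v\in H_N$. Likewise $(u^{-1})^{-1}\varphi(u^{-1})=u\,\varphi(u)^{-1}$ is a conjugate of $\bigl(u^{-1}\varphi(u)\bigr)^{-1}$. Hence it always suffices to check the condition $w^{-1}\varphi(w)\in\gamma_{i+j}$ on a set of generators of $\gamma_j$.

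\emph{Base case $j=1$.} We take $N=\gamma_{i+1}$. The generators $x_k$ lie in $H_N$ by the choice of $\varphi$, so $H_N=F=\gamma_1$.

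\emph{Inductive step.} We assume the claim for $j$ and prove it for $j+1$. The group $\gamma_{j+1}=[\gamma_j,F]$ is generated by the commutators $[u,x]$ with $u\in\gamma_j$ and $x\in F$, so by the observation above with $N=\gamma_{i+j+1}$ it suffices to show that $[u,x]^{-1}\varphi([u,x])\in\gamma_{i+j+1}$. By the inductive hypothesis $\varphi(u)=ua$ with $a\in\gamma_{i+j}$, and by the base case $\varphi(x)=xb$ with $b\in\gamma_{i+1}$. Then $\varphi([u,x])=[ua,xb]$. We expand this using the standard identities
\[
[yz,t]=[y,t]^z[z,t],\qquad [y,zt]=[y,t][y,z]^t,
\]
together with $[\gamma_r,\gamma_s]\subseteq\gamma_{r+s}$. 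Each term other than $[u,x]$ is (a conjugate of) one of $[a,x]$, $[a,b]$, $[u,b]$, and these lie in $\gamma_{i+j+1}$ because $a\in\gamma_{i+j}$, $b\in\gamma_{i+1}$ and $u\in\gamma_j$. Since $\gamma_{i+j+1}$ is normal, conjugations are harmless, and $[u,x]^z=[u,x][u,x,z]$ with $[u,x,z]\in\gamma_{j+2}$. The outcome is $[ua,xb]\equiv[u,x]\pmod{\gamma_{i+j+1}}$.

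The step I expect to be the main obstacle is this bookkeeping in the commutator expansion, in particular checking that no conjugation correction lies only in $\gamma_{j+2}$ rather than in $\gamma_{i+j+1}$. The point is that every such correction appears as a commutator of an element that already lies in $\gamma_{i+j+1}$ or in $\gamma_{i+j}$ (the latter commuted with an element of $F$). Such a commutator stays in $\gamma_{i+j+1}$, and the bare $[u,x]$ itself is never corrected. Once this congruence is secured, the inductive step is complete.
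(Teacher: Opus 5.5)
The paper does not prove this statement; it only cites Andreadakis. Your argument is a correct, self-contained proof along standard lines. The subgroup $H_N$ reduces everything to generators, the base case comes from $\varphi(x_k)=x_kb_k$, and the induction runs through the generators $[u,x]$ of $\gamma_{j+1}=[\gamma_j,F]$.

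It also gives something the citation does not: it uses only that $\varphi$ is an \emph{endomorphism}. Andreadakis' result is formulated for the automorphism filtration, and the usual proof via the three subgroup lemma takes place in a group containing both $F$ and the automorphisms (the holomorph). The paper, however, invokes the theorem in Lemma~\ref{lem:IA-invertible} precisely to show that elements of $\Id+T$ are invertible. An endomorphism version like yours is what that lemma actually needs.

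There is one slip in the bookkeeping that you should repair. With $\varphi(u)=ua$, $\varphi(x)=xb$ and the two identities you quote, the expansion is
\[
[ua,xb]=[u,b]^a\,[u,x]^{ba}\,[a,b]\,[a,x]^b .
\]
So, contrary to your last paragraph, the bare $[u,x]$ \emph{is} corrected: it appears conjugated by $ba$. The bound you wrote, $[u,x,z]\in\gamma_{j+2}$, is not enough modulo $\gamma_{i+j+1}$ once $i\ge 2$.

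The fix is one line. Since $a\in\gamma_{i+j}\subseteq\gamma_{i+1}$ and $b\in\gamma_{i+1}$, we have $ba\in\gamma_{i+1}$, hence $[[u,x],ba]\in[\gamma_{j+1},\gamma_{i+1}]\subseteq\gamma_{i+j+2}$.

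Alternatively, write $\varphi(u)=a'u$ and $\varphi(x)=b'x$ with $a'=uau^{-1}\in\gamma_{i+j}$ and $b'=xbx^{-1}\in\gamma_{i+1}$. Then
\[
[a'u,b'x]=[a',b'x]^u\,[u,x]\,[u,b']^x ,
\]
and $[u,x]$ genuinely occurs unconjugated between two factors lying in $\gamma_{i+j+1}$.

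Either way $[ua,xb]\equiv[u,x]\pmod{\gamma_{i+j+1}}$, and the induction closes.
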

		\begin{lemma}\label{lem:IA-invertible}
			Every \(\alpha\in\Id+T\) is an automorphism of \(F\).
		\end{lemma}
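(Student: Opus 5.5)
The plan is to use the classical fact that a nilpotent group is generated by any subset whose image generates its abelianization. We then combine this with the Hopfian property of finitely generated nilpotent groups.

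First we show that $\alpha$ is surjective. Since $\alpha\in\Id+T$, we have $\alpha(x_j)=x_j\,\eta(x_j)$ with $\eta(x_j)\in\gamma_2$. Hence $\alpha(F)\gamma_2=F$. We then prove by induction on $s$ that $\alpha(F)\gamma_s=F$ for all $s\ge2$.

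For the inductive step, assume $\alpha(F)\gamma_s=F$. The group $\gamma_s/\gamma_{s+1}$ is generated by the images of the commutators $[y_1,\dots,y_s]$ with $y_k\in F$. Writing $y_k=a_k c_k$ with $a_k\in\alpha(F)$ and $c_k\in\gamma_s$, multilinearity of the commutator map $F^s\to\gamma_s/\gamma_{s+1}$ gives $[y_1,\dots,y_s]\equiv[a_1,\dots,a_s]\pmod{\gamma_{s+1}}$. This holds because any entry lying in $\gamma_s\subseteq\gamma_2$ contributes an element of $\gamma_{s+1}$. It follows that $\gamma_s\subseteq\alpha(F)\gamma_{s+1}$, and hence $F=\alpha(F)\gamma_{s+1}$.

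Taking $s=c+1$ and using $\gamma_{c+1}=1$, we get $\alpha(F)=F$. Alternatively, the same conclusion follows from the standard fact that a subgroup $H$ with $H\gamma_2=F$ equals $F$ in a nilpotent group. An even more direct route is Theorem~\ref{thm:andreadakis}: since $[\gamma_j,\alpha]\subseteq\gamma_{j+1}$, the map $\alpha$ induces the identity on each quotient $\gamma_j/\gamma_{j+1}$. Surjectivity on every layer then lifts through the finite series to surjectivity on $F$.

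Second we show injectivity, and here there are two options. The first is to invoke that finitely generated nilpotent groups are Hopfian, being residually finite: a surjective endomorphism is then injective. The second avoids Hopficity by using Theorem~\ref{thm:andreadakis} again. If $\alpha(w)=1$ with $1\ne w\in\gamma_j\setminus\gamma_{j+1}$, then $w^{-1}\alpha(w)=[w,\alpha]\in\gamma_{j+1}$, which forces $w\in\gamma_{j+1}$, a contradiction. I would present this second argument, since it is self-contained given the quoted theorem. It shows $\ker\alpha\cap\gamma_j\subseteq\gamma_{j+1}$ for all $j$, and therefore $\ker\alpha\subseteq\gamma_{c+1}=1$.

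The main obstacle is making the surjectivity step rigorous. The key point is the layerwise statement that $\alpha$ induces the identity on $\gamma_j/\gamma_{j+1}$, which comes directly from Theorem~\ref{thm:andreadakis} with $i=1$. From it, a downward induction shows that $\alpha(F)\supseteq\gamma_j$ for all $j$: given $z\in\gamma_j$, we have $z\equiv\alpha(z)\pmod{\gamma_{j+1}}$, and $\gamma_{j+1}\subseteq\alpha(F)$ by the inductive hypothesis. The base case $\gamma_{c+1}=1$ is trivial. The case $j=1$ then gives $\alpha(F)=F$.
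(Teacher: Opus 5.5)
Your proof is correct, and the route you settle on is essentially the paper's. For surjectivity, the paper phrases your downward induction as an iteration: it starts from $x=\alpha(x)z$ with $z\in\gamma_2$ and repeatedly pushes the error one step deeper using Theorem~\ref{thm:andreadakis}, until it vanishes in $\gamma_{c+1}=1$. For injectivity, it uses the same observation you do: $\alpha(w)=1$ with $w\in\gamma_j\setminus\gamma_{j+1}$ would force $w^{-1}\in\gamma_{j+1}$. Your alternatives, via multilinearity of commutators modulo $\gamma_{s+1}$ or via Hopficity of finitely generated nilpotent groups, are also valid but not needed.
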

		\begin{proof}
			By Theorem~\ref{thm:andreadakis} we have that, for every \(\sigma\in\Id+T\) and every \(v\in \gamma_s\),
			\begin{equation}\label{eq:andreadakis}
				\sigma(v)v^{-1}\in\gamma_{s+1}.
			\end{equation}
			Since \(\alpha\) is an \(IA\)-endomorphism, every \(x\in F\) can be written as \(x=\alpha(x)z\) with
			\(z\in \gamma_2\). Iterating this process, using Equation~\eqref{eq:andreadakis} and the fact that \(\gamma_{c+1}=1\),
				we obtain the surjectivity of \(\alpha\). For the injectivity, suppose that \(\alpha(x)=1\) with
			\(1\neq x\in\gamma_s\setminus \gamma_{s+1}\).
			By Equation~\eqref{eq:andreadakis} we have that
			\(\alpha(x)x^{-1}\in\gamma_{s+1}\) and so \(x^{-1}\in \gamma_{s+1}\), a contradiction.
		\end{proof}
		
		We recall that in the case of left near-rings (see Remark~\ref{rem:left_near_ring})
		\(\Delta_g(f,f')=(\Id+f'+g)\circbin f-(\Id+f')\circbin f\).
		
		\begin{corollary}
			Let \(f\in T_j\), \(f'\in T\) and \(g\in T_i\). Then \(\Delta_g(f,f')\in T_{i+j}\).
		\end{corollary}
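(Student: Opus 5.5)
Since $\Delta_g(f,f')$ is an endomorphism of $F$, it suffices to check its values on the generators. Our target is
\[
\Delta_g(f,f')(x_k)\in\gamma_{i+j+1}
\]
for every $k$. In the left near-ring, addition is computed pointwise on generators, so
\[
\Delta_g(f,f')(x_k)=\bigl((\Id+f'+g)\circbin f\bigr)(x_k)\,\bigl(((\Id+f')\circbin f)(x_k)\bigr)^{-1}.
\]
Put $w\defeq f(x_k)\in\gamma_{j+1}$, $\sigma\defeq\Id+f'+g$ and $\tau\defeq\Id+f'$. Both $\sigma$ and $\tau$ lie in $\Id+T$, and the expression above becomes $\sigma(w)\tau(w)^{-1}$.

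By Lemma~\ref{lem:IA-invertible}, $\tau$ is an automorphism. Write $\sigma=\rho\circbin\tau$ with $\rho\defeq\sigma\circbin\tau^{-1}$. Then
\[
\sigma(w)\tau(w)^{-1}=\rho(w')\,w'^{-1},\qquad w'\defeq\tau(w)\in\gamma_{j+1},
\]
where $w'\in\gamma_{j+1}$ because $\gamma_{j+1}$ is fully invariant. It therefore suffices to show that $\rho\in\Id+T_i$. Once that is known, Theorem~\ref{thm:andreadakis} gives $\rho(w')w'^{-1}\in[\gamma_{j+1},\Id+T_i]\subseteq\gamma_{i+j+1}$, which is exactly membership in $T_{i+j}$.

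The key step is the claim $\rho\in\Id+T_i$. On generators,
\[
\sigma(x_m)=x_m f'(x_m)g(x_m)=\tau(x_m)\,g(x_m),
\]
with $g(x_m)\in\gamma_{i+1}$. Set $y_m\defeq\tau(x_m)$. Then $\rho(y_m)=y_m\,g(x_m)$, so $\rho(y_m)y_m^{-1}\in\gamma_{i+1}$. We need the same congruence for the generators $x_m$ themselves. Since $\tau$ is an automorphism, the $y_m$ form another free generating set of $F$. A standard induction on word length shows that $\{u\in F : \rho(u)u^{-1}\in\gamma_{i+1}\}$ is closed under products and inverses. Indeed, $\rho(uv)(uv)^{-1}=\rho(u)\,\bigl(\rho(v)v^{-1}\bigr)\,u^{-1}$, and conjugation preserves $\gamma_{i+1}$ (note $\rho(u)\equiv u$ modulo $\gamma_{i+1}$, so the factor reduces correctly). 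Because this set contains all the $y_m$, it is all of $F$. Hence $\rho(x_m)x_m^{-1}\in\gamma_{i+1}$, i.e.\ $\rho\in\Id+T_i$.

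The main obstacle is this change of generating set. Membership in $T_i$ is defined via the particular generators $x_m$, and the subgroup-closure argument is what shows that the condition is basis independent. Should that route be awkward, one could instead use the Andreadakis filtration directly: $\Id+T_i$ is the kernel of the action on $F/\gamma_{i+1}$, hence a subgroup of $\operatorname{Aut}(F)$. Then $\rho=\sigma\tau^{-1}$ acts trivially modulo $\gamma_{i+1}$, because $\sigma$ and $\tau$ agree modulo $\gamma_{i+1}$ on generators and therefore on all of $F$.
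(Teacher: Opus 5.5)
Your proposal is correct and follows essentially the same route as the paper. You factor $\Id+f'+g=\rho\circbin(\Id+f')$ using Lemma~\ref{lem:IA-invertible}, show $\rho\in\Id+T_i$ by a subgroup-closure argument, and apply Theorem~\ref{thm:andreadakis} to $(\Id+f')(f(x_m))\in\gamma_{j+1}$. The only cosmetic difference is that you run the closure argument for $\rho$ on the generating set $\tau(x_m)$, whereas the paper runs it for $v\mapsto\beta(v)\alpha(v)^{-1}$ on the $x_m$ and then substitutes $v=\alpha^{-1}(u)$.
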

		
		\begin{proof}
			Set \(\alpha\defeq\Id+f'\) and \(\beta\defeq\Id+f'+g\). We note that \(\alpha\) is invertible by Lemma~\ref{lem:IA-invertible}. We claim that \(\beta\circbin\alpha^{-1}\in \Id+T_i\), i.e.\
			\((\beta\circbin\alpha^{-1})(u)u^{-1}\in\gamma_{i+1}\) for every \(u\in F\).
			Notice that the set \(\{v\in F\mid \beta(v)\alpha(v)^{-1}\in\gamma_{i+1}\}\) is a subgroup of \(F\)
			by normality of \(\gamma_{i+1}\), and it contains the generators, since
			\[
			\beta(x_k)\alpha(x_k)^{-1}
			= x_k f'(x_k)\,g(x_k)\,\bigl(x_k f'(x_k)\bigr)^{-1}
			= g(x_k)^{(x_k f'(x_k))^{-1}}\in\gamma_{i+1}.
			\]
			Hence it is the whole of \(F\). Choosing \(v=\alpha^{-1}(u)\) we get \((\beta\circbin \alpha^{-1})(u)u^{-1}\in\gamma_{i+1}\), which proves the claim.
			
			Now let \(x_m\) be a generator and set \(w\defeq f(x_m)\in\gamma_{j+1}\). Writing
				\(\beta(w)\alpha(w)^{-1}=\sigma(u)u^{-1}\) with \(\sigma\defeq\beta\circbin\alpha^{-1}\in\Id+T_i\) and
				\(u\defeq\alpha(w)\in\gamma_{j+1}\), Theorem~\ref{thm:andreadakis} yields
			\[
			\Delta_g(f,f')(x_m)
			=\beta\bigl(f(x_m)\bigr)\,\alpha\bigl(f(x_m)\bigr)^{-1}
			\in[\gamma_{j+1},\Id+T_i]\subseteq\gamma_{i+j+1}.
			\]
			It follows that \(\Delta_g(f,f')\in T_{i+j}\).
		\end{proof}
		In particular, taking \(j=1\), the near-ring \(T\) is compatible. Since the filtration
			is finite, \(T\) is complete with respect to the discrete topology; hence, by
			Remark~\ref{rem:left_near_ring}, \((T,+,\ast)\) is a topological left
			skew brace.
			
			\begin{remark}
				By Lemma~\ref{lem:IA-invertible}, every \(IA\)-endomorphism of \(F\) is in fact an
				automorphism, so that \(\Id+T\) coincides with the group of
				\(IA\)-automorphisms of \(F\). Since \(\Id+(f\ast g)=(\Id+f)\circbin(\Id+g)\),
				the map \(f\mapsto\Id+f\) is an isomorphism between \((T,\ast)\) and the group
				of \(IA\)-automorphisms of \(F\).
			
				The additive group \((T,+)\cong(\gamma_2)^n\) is non-abelian if and only if
				\(\gamma_2\) is non-abelian. Since \([\gamma_2,\gamma_2]\subseteq\gamma_4\),
				for \(c\le 3\) the group \(\gamma_2\) is abelian and \((T,+,\ast)\) is a left
				brace; hence, in order to obtain a skew brace which is not a brace, one needs
				\(c\ge 4\).
		\end{remark}

	\section*{Acknowledgements}
The authors  acknowledge  the funding support from MUR (Italy) through the PRIN 2022 project 2022RFAZCJ,
\emph{Algebraic methods in Cryptanalysis}.

 \bibliography{citation}

\end{document}